\newtheorem{defi}{Definition}
\newtheorem{theorem}{Theorem}
\newtheorem{lemma}{Lemma}
\newtheorem{prop}{Proposition}
\begin{document}

\begin{frontmatter}

\title{Prediction regions through Inverse Regression
\thanksref{t1}}
\thankstext{t1}{This is an original survey paper}
\runtitle{Prediction regions  through Inverse Regression
}


\author{\fnms{Emilie} \snm{Devijver}\ead[label=e1]{emilie.devijver@kuleuven.be}}
\address{Univ. Grenoble Alpes, CNRS, Grenoble INP\footnote{Institute of Engineering Univ. Grenoble Alpes}, LIG, 38000 Grenoble, France
\\ \printead{e1}}  
\and
\author{\fnms{Emeline} \snm{Perthame}\ead[label=e2]{emeline.perthame@pasteur.fr}}
\address{Institut Pasteur - Bioinformatics and Biostatistics Hub - C3BI, USR 3756 IP CNRS - Paris, France \\ \printead{e2}}

\runauthor{E. Devijver and E. Perthame}

\begin{abstract}
Predict a new response from a covariate is a challenging task in regression, which raises new question since the era of high-dimensional data.
In this paper, we are interested in the inverse regression method from a theoretical viewpoint. 
Theoretical results have already been derived for the well-known linear model, but recently, the curse of dimensionality has increased the interest of practitioners and theoreticians into generalization of those results for various estimators, calibrated for the high-dimension context.
To deal with high-dimensional data, inverse regression is used in this paper. It is known to be  a reliable and efficient approach  when the number of features exceeds the number of observations. Indeed, under some conditions, dealing with the inverse regression problem associated to a forward regression problem drastically reduces the number of parameters to estimate and make the problem tractable.  
When both the responses and the covariates are multivariate, estimators constructed by the inverse regression are studied in this paper, the main result being explicit asymptotic prediction regions for the response. The performances of the proposed estimators and prediction regions are also analyzed through a simulation study and compared with usual estimators.
\end{abstract}
\begin{keyword}[class=MSC]
 \kwd{62F12; 62F25; 62J05; 62E20}
\end{keyword}

\begin{keyword}
\kwd{Inverse regression}
\kwd{Prediction regions}
\kwd{Confidence regions}
\kwd{High-dimension}
\kwd{Asymptotic normality}
\end{keyword}

\end{frontmatter}

\section{Introduction}
In a multiple (several response variables) and multivariate (several predictors) regression  framework, one wants to linearly describe a response $\Yb \in \mathbb{R}^L$ from regressors $\Xb \in \mathbb{R}^D$. The standard Gaussian linear model assumes that there exists $\Ab^\star \in \mathbb{R}^{L\times D}$ such that 
\begin{align}
    \Yb = \Ab^\star \Xb + \boldsymbol{\varepsilon} \label{eq:mod_depart}
\end{align} 
where the unobserved error term $\boldsymbol{\varepsilon} \sim \mathcal{N}_L(\mathbf{0}, \Sigmab^\star)$ is a Gaussian white noise.

\vspace{1em}
When considering a high number of predictors, the number of parameters could be quickly larger than the sample size, making the estimates impossible to compute in practice or/and providing bad performances for estimators such as lack of stability. This phenomena is generally referred as curse of dimensionality.
Several tricks have been proposed in the literature to cope with this issue.

One of the most famous method is variable selection based on regularized regression, which reduces the dimension of the regression problem to the subset of the most relevant features. Methods include the Lasso \cite{Tibshirani}, the Dantzig selector \cite{CandesTao}, or the ridge estimator \cite{HoerlKennard} to refer to the most popular. These widely used methods are designed to account for univariate response and few implementations exist for multivariate response,  considering then independent response terms.

Another way to deal with high dimensional data consists in dimension reduction techniques which extract components or latent variables that summarize the information of a large dataset into a small dimension space. For example, the Principal Component Regression (PCR) selects a subset of principal components for regression and  focuses on hyperplanes; the Partial Least Square regression (PLS) projects the predicted variables and looks for latent variables, correlated to both response and covariates,  in order to perform the regression of $\Yb$ on $\Xb$ in a space of lower dimension than $D$ ; and the Sliced Inverse Regression (SIR) introduced in \cite{L91} restricts the regressors to few projections by inverting the role of predictors and response. SIR is based on a prior linear dimension reduction by considering the covariance matrix of the inverse expectation $\mathbb E (\Xv | \Yv)$ (hence the name of the method). The eigenvectors of this covariance matrix are computed in order to find a subspace that retains the information on $\Yv$ contained by the predictors. 
However, the number of axes to retain must be specified beforehand, which is one of the main drawbacks of those methods.  Even if procedures have been proposed to choose this parameter, the results are still sensitive to this choice.

More precisely, in the context of regression with random predictors, several authors proposed reduction dimension techniques based on the joint distribution of both predictors and response \cite{GO96,H92,HA94} to identify components used to reduce the dimension of predictors matrix. Interestingly, while the regression of interest (referred as {\it forward} regression in the literature) usually models the conditional distribution of response given predictors $\Yv | \Xv$, some authors explored the properties of inverse models, meaning that the conditional distribution of predictors is studied given the response $\Xv | \Yv$ (referred as {\it inverse} regression, \cite{O91}). See \cite{C07} for an interesting overview of these techniques. 
 The goal of inverse regression techniques is to  preserve the information on the regression of interest by studying the inverse conditional distribution as it is directly related to the forward conditional distribution of interest. It  consists in inverting the role of response and covariates in the regression model to estimate parameters, taking benefit of the large number of regressors as observations and of the small size of the response.  Note that this inversion regression approach has been studied to estimate Gaussian mixtures of regression models and applied to various data (planetology and spectra \cite{DFH14, PFD16}).

Whereas variable selection methods are mainly used for high-dimensional data, the inverse regression approach is particularly interesting in three specific frameworks. First, when $D>>N$, if a large number of covariates is known to have an impact on the response (e.g. in planetology \cite{DFH14}), selecting variables is not relevant while inverse regression is effective. 
Secondly, when dealing with large dimension for both sample size and number of predictors ($N$ and $D$ large), inverse regression is also a performing method under some weak assumptions:  it avoids the inversion of a large empirical covariance matrix which is time consuming in practice even if it is invertible in theory. Thirdly, inverse regression has the advantage to allow multiple response potentially correlated, which is more and more frequent with real data (e.g. in biology with measurement of multiple phenotypes \cite{sclerose}).

\vspace{1em}

In this paper, we propose to address the multiple linear regression problem of Equation~\eqref{eq:mod_depart} under an inverse regression approach. We study  first the theoretical properties of the estimators of the inverse regression model. Then we focus on a prediction purpose by deriving prediction regions. Indeed, under the linear modeling framework, one can predict a new response from a new covariate using the estimator of regression coefficient matrix $\Ab^\star$. Provided that an estimator of $\Ab^\star$ is available, it is relevant to quantify uncertainty around this prediction. This paper focuses on both confidence region for parameters estimates and prediction regions in high-dimensional settings. 

 Note that few theoretical  confidence intervals have been derived in high dimensional context.
For Lasso based estimators,  \cite{JM14, vDGDBRD14, ZZ14} derive confidence regions for slope coefficient and statistical testing of sparsity for linear model using several tools: relaxed projection \cite{ZZ14}, desparsifying Lasso \cite{vDGDBRD14} or through the computation of an approximate inverse of the Gram matrix \cite{JM14}.
Since those pioneer works, several articles provide extensions for more general models or estimators, as generalised linear model (\cite{vDGDBRD14} for convex loss function, \cite{JvDG15} for subdifferential loss). We also refer to \cite{M15} for groups of variables and \cite{SvdG} for linear regression models with structured sparsity, among others. 
However, those results rely on strong assumptions on the design and although some authors consider more practical aspects \cite{Chao,Lee}, those results still remain difficult to be implemented.

In this paper, we propose to address the linear regression problem of Equation~\eqref{eq:mod_depart} by considering an inverse regression approach rather than sparse regression. We assume that the residuals of the inverse model are independent which reduce the number of parameters to estimate and overcome the dimensionality burden. In this modelling context, assessing confidence in predicted values is one major goal as deriving prediction regions is classical in regression, for the least square estimator for example.  However, when the number of predictors becomes too large, least square method suffers from the curse of dimensionality, has bad performances and is computationally intensive while inverse regression approach tackles this problem. Considering this approach, we get asymptotic and non asymptotic distribution for parameters estimates, and then derive confidence regions for slope coefficients. Moreover, we derive asymptotic prediction regions which quantify uncertainty with prediction through an asymptotic normality theorem. Then, the properties of parameters estimates are illustrated in an intensive simulation study through finite distance examples.

\vspace{1em}

The paper is organised as follows. In Section~\ref{model}, the inverse regression model is introduced, as well as the estimation and prediction procedure. Asymptotic and non asymptotic distribution of parameters estimates are derived in Section~\ref{theory}. Then, confidence region of slope coefficients and prediction regions are established in Section \ref{confidencePredictionRegion}. The finite-sample performance of the proposed confidence and prediction regions are investigated in Section \ref{simulations}, which also includes a comparison with existing methods namely least squares and Lasso. The paper concludes by a discussion in Section \ref{conclusion}.

\section{Inverse regression model}
\label{model}
In this section, we introduce the various elements of the modeling framework.
\subsection{Inverse regression method}


We propose to address the following linear regression problem with random regressors; known as generative model:
\begin{align}
\Xv_i &\sim \mathcal{N}_D(\mathbf{0},\Gammab^\star) \label{eq:model::X}\\
\Yv_i \vert \Xv_i &= \Ab^\star \Xv_i + \boldsymbol{\varepsilon}_i \label{eq:model::Y|X}
\end{align}
where $\Yv=(\Yv_1,\ldots,\Yv_N) \in \mathbb R^{L\times N}$ contains $L$ responses for $N$ subjects and $\Xv=(\Xv_1,\ldots,\Xv_N) \in \mathbb R^{D\times N}$ contains $D$ Gaussian centered predictors with covariance matrix $\Gammab^\star$. The error term $\boldsymbol{\varepsilon} =(\boldsymbol{\varepsilon}_1,\ldots,\boldsymbol{\varepsilon}_N) $ is an unobserved $L\times N$ matrix with independent columns normally distributed, $\boldsymbol{\varepsilon}_1, \ldots, \boldsymbol{\varepsilon}_N \sim \mathcal{N}_L(\mathbf{0}, \Sigmab^\star)$.
The $L\times D$ matrix of slope coefficients  is denoted by $\Ab^\star$.
When $D$ is large or/and when the number of observations  $N$ is smaller than $D$, the so-called least square estimate of $\Ab^\star$ is not numerically computable for the \textit{forward regression} defined in Equations \eqref{eq:model::X} and \eqref{eq:model::Y|X}. Indeed, it requires the inversion of the possibly large matrix $\Xv^\top\Xv$ which is not invertible when $D > N$ and computationally intensive for large D when $N > D$.
An interesting and relatively simple approach to handle this high dimensional problem is to consider the \textit{inverse regression} problem:
\begin{align}
\Yv_i &\sim \mathcal{N}_L(\mathbf{0},\Gammab) \label{eq:model::Y}\\
\Xv_i \vert \Yv_i &= \Ab \Yv_i + \mathbf{e}_i \label{eq:model::X|Y}
\end{align}
where $\Ab$ is a $D \times L$ matrix of slope coefficients of the \textit{inverse regression} and $\mathbf{e}=(\mathbf{e}_1,\ldots,\mathbf{e}_N)$ is a $D \times N$ matrix of unobserved centered Gaussian random noise with residual covariance matrix $\Sigmab$. The inverse regression approach consists in inverting the response and the covariates in the model and performing regression of response on covariates. While least squares estimate is not computable in high dimension for forward regression, it turns out that dealing with the inverse regression problem, under some assumptions on the noise $\mathbf{e}$ detailed hereafter, drastically reduces the number of parameters and makes the problem tractable. 

Note that no intercept is considered in models  \eqref{eq:model::Y} and \eqref{eq:model::X|Y}, which leads to  assume that both response and covariates are centered.

Interestingly, forward parameters $(\Gammab^\star,\Ab^\star,\Sigmab^\star)$ are expressed in function of the inverse parameters $(\Gammab,\Ab,\Sigmab)$ through the following mapping: 
\begin{align}
  \Psi : (\Gammab, \Ab, \Sigmab) \mapsto & (\Gammab^\star, \Ab^\star, \Sigmab^\star)  \nonumber\\
  = & (\Sigmab+{\Ab}\Gammab{\Ab}^{\top}, (\Gammab^{-1}+{\Ab}^{\top}\Sigmab^{-1}{\Ab})^{-1} {\Ab}^{\top}\Sigmab^{-1}, \label{eq:bijection}\\ 
  & (\Gammab^{-1}+{\Ab}^{\top}\Sigmab^{-1}{\Ab})^{-1}). \nonumber
  \end{align}

As $\Psi$ is a one-to-one mapping, estimating the forward regression model, Equations~\eqref{eq:model::X}-\eqref{eq:model::Y|X},  or the inverse regression model, Equations~\eqref{eq:model::Y}-\eqref{eq:model::X|Y}, is equivalent. One can also notice that $\Psi$ is an involution. The advantage of the inverse approach appears when assumptions are made on the large residual covariance matrix $\Sigmab$ in the inverse regression problem of Equations~\eqref{eq:model::Y}-\eqref{eq:model::X|Y}. Indeed, assuming that $\Sigmab$ is diagonal drastically reduces the number of parameters to estimate, while keeping a general modelling. For example, if $D=100$ and $L=5$, the number of parameters to estimate goes from $LD+L(L+1)/2+D(D+1)/2=5565$ in the full model to $LD+L(L+1)/2+D=615$ by assuming that $\Sigmab$ is diagonal. 

\subsection{Estimation}

Considering the inverse model defined in Equations ~\eqref{eq:model::Y}-\eqref{eq:model::X|Y}, the least squares estimators are:
\begin{align}
\widehat{\Gammab} &= \frac{1}{N-1} \Yb^\top \Yb \nonumber\\
\widehat{\Ab} &= (\Yb^\top \Yb)^{-1} \Yb\top \Xb \label{AChap}\\
\forall j\in \{1,\ldots, D\}, \vspace{1em} \widehat{\Sigmab}_{j,j} &= \frac{1}{n-1} \sum_{i=1}^n (\Xb_{i,j}-[\widehat{\Ab}\Yb_i]_j)^2. \nonumber
\end{align}
Then, using $\Psi$, we get straightforwardly estimators for the forward regression:
\begin{align}
\widehat{\Gammab}^\star &= \widehat \Sigmab+\widehat{\Ab} \widehat \Gammab \widehat{\Ab}^{\top} \label{gamma*hat}\\
\widehat{\Ab}^\star &=  (\widehat \Gammab^{-1}+\widehat{\Ab}^{\top}\widehat \Sigmab^{-1}\widehat{\Ab})^{-1} \widehat{\Ab}^{\top}\widehat \Sigmab^{-1} \label{AChapStar}\\
\widehat{\Sigmab}^\star &=  (\widehat \Gammab^{-1}+\widehat{\Ab}^{^\top}\widehat \Sigmab^{-1}\widehat{\Ab})^{-1}\label{sigma*hat}. 
\end{align}
The inverse regression trick allows to compute those estimators even when $D>>N$ as it  requires the inversion of the $L \times L$ matrix $\Yb^T \Yb$ and not the inverse of  $\Xb^T \Xb$.
Moreover, inverse regression is not as computationally intensive as the least squares, because only small or diagonal matrices are inverted: $\widehat{\Gammab}$ is of size $L$ and $\widehat{\Sigmab}$ is diagonal.

\subsection{Prediction of the response}
\label{section::prediction}
Considering those estimators $(\widehat\Ab^\star,\widehat \Gammab^\star,\widehat \Sigmab^\star)$, a new response $\widehat \Yv_{N+1}$ is predicted  for a new observed profile $\xv_{N+1}$ from Model~\eqref{eq:model::Y|X} and defined by:
$$
\widehat \Yv_{N+1} = \widehat \Ab^\star\xv_{N+1}.
$$
In this article, we are interested in studying the uncertainty around this prediction which can be quantified by deriving prediction region. Moreover, we establish the exact distribution of $\widehat \Gammab^\star$ and $\widehat \Sigmab^\star$ and the asymptotic normality of $\widehat\Ab^\star$ which is used to deduce prediction regions.

\section{Theoretical study of the estimators}
            \label{theory}
In this section, we assume that covariance matrices $\Sigmab$ and $\Gammab$ are known. 
Moreover, $\Sigmab$ is supposed to be diagonal, which implies a diagonal + low rank decomposition for $\Gammab^\star$. It allows correlations among covariates. 

%
%
%
Under those assumptions, exact and asymptotic distribution of estimators are derived in this section for the forward regression. 

\subsection{Matrix  normal distribution and Kronecker product}
First we recall some properties about the matrix normal distribution and the tensor product. These results can be found in \cite{GNbook} chapter 2, but every important property is recalled in this paper as we use it extensively.


\begin{defi}[Kronecker product]
Let $A \in M_{m,n}(\mathbb{R})$ and $B \in M_{p,q} (\mathbb{R})$.
Then, the Kronecker product $A \otimes B$ is the $mp \times nq$ block matrix:
\begin{align*}
A \otimes B = \begin{pmatrix} a_{11} B & \ldots, & a_{1n} B \\
\vdots & \ddots & \vdots \\
a_{m1} B& \ldots & A_{mn} B 
\end{pmatrix}.
\end{align*}
\end{defi}

The vectorization is used to work with vectors instead of matrices.
\begin{defi}[Vectorization] The vectorization $\text{vec}(A)$ of a matrix $A$  is a linear transformation which converts the matrix into a column vector, by stacking the columns of the matrix on top of one another.
\end{defi}

As we are interested in the distribution of matrix parameters, the matrix normal distribution is introduced.
\begin{defi}[Matrix  normal distribution]
The random variable
$X \in \mathbb{R}^{L\times D}$ is distributed according to a \emph{matrix  normal distribution} with mean $X_0$ and variances $U\in M_{L,L}(\mathbb{R}) $  (among-row) and $V\in M_{D,D}(\mathbb{R}) $ (among-column), denoted 
$$
X \sim \mathcal{MN}_{LD} (X_0, U, V),
$$
if and only if
$\text{vec}(X) \sim \mathcal{N}_{LD} (\text{vec}(X_0), V\otimes U)$.
\end{defi}

For this distribution, some interesting properties are derived.
\begin{prop}
The following equivalence holds:
\begin{align*}
X \sim \mathcal{MN}_{LD} (X_0, U, V) \hspace{1cm}  \Leftrightarrow \hspace{1cm} X^T \sim \mathcal{MN}_{DL} (X_0^T, V, U).
\end{align*}
\end{prop}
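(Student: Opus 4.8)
The plan is to reduce the statement to the defining property of the matrix normal law through vectorization, and to move the transpose across $\text{vec}(\cdot)$ by means of the commutation (permutation) matrix. Concretely, I would first introduce the commutation matrix $K_{L,D} \in M_{LD,LD}(\mathbb{R})$, the permutation matrix characterised by $\text{vec}(A^T) = K_{L,D}\,\text{vec}(A)$ for every $A \in M_{L,D}(\mathbb{R})$. Two elementary facts will be used: $K_{L,D}$ is orthogonal, so that $K_{L,D}^{-1} = K_{L,D}^T = K_{D,L}$; and it intertwines Kronecker products, in the sense that $K_{L,D}\,(V \otimes U) = (U \otimes V)\,K_{L,D}$ whenever $U \in M_{L,L}(\mathbb{R})$ and $V \in M_{D,D}(\mathbb{R})$ are square. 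Both are classical (special cases of the identities collected in \cite{GNbook}, chapter 2) and, if needed, can be verified directly by evaluating on the canonical basis of $\mathbb{R}^{LD}$.

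The core of the argument is then a single push-forward of a Gaussian vector under a linear map. Assume $X \sim \mathcal{MN}_{LD}(X_0, U, V)$, i.e., by definition, $\text{vec}(X) \sim \mathcal{N}_{LD}(\text{vec}(X_0), V \otimes U)$. Since $\text{vec}(X^T) = K_{L,D}\,\text{vec}(X)$ is a linear image of a Gaussian vector, it is Gaussian with mean $K_{L,D}\,\text{vec}(X_0) = \text{vec}(X_0^T)$ and covariance $K_{L,D}\,(V \otimes U)\,K_{L,D}^T$. Applying the intertwining identity and then orthogonality gives $K_{L,D}\,(V \otimes U)\,K_{L,D}^T = (U \otimes V)\,K_{L,D}K_{L,D}^T = U \otimes V$. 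Hence $\text{vec}(X^T) \sim \mathcal{N}_{DL}(\text{vec}(X_0^T), U \otimes V)$, which is exactly the assertion $X^T \sim \mathcal{MN}_{DL}(X_0^T, V, U)$, reading $V$ as the among-row and $U$ as the among-column variance of the $D \times L$ matrix $X^T$. For the converse I would not rerun the computation: either invoke that transposition is an involution and apply the forward direction to $X^T$, or simply observe that each step above is an equivalence because $K_{L,D}$ is a bijection, so the chain of ``$\Leftrightarrow$'' already closes.

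I do not expect a genuine obstacle; the only delicate point is index bookkeeping — keeping track of which commutation matrix ($K_{L,D}$ versus $K_{D,L}$) appears and of the order of the factors in the Kronecker product, since the paper's convention places the among-column variance first (the covariance of $\text{vec}(X)$ is $V \otimes U$, not $U \otimes V$). I would therefore double-check the intertwining identity against that convention on a small explicit example before committing to the final write-up.
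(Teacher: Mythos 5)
Your argument is correct: the paper itself states this proposition without proof (deferring to chapter~2 of the cited reference on matrix variate distributions), and your commutation-matrix derivation is exactly the standard one — $\text{vec}(X^T)=K_{L,D}\,\text{vec}(X)$ is a linear image of a Gaussian, and the identity $K_{L,D}(V\otimes U)K_{L,D}^T=U\otimes V$ together with orthogonality of $K_{L,D}$ yields the covariance $U\otimes V$, which is precisely the defining condition for $X^T\sim\mathcal{MN}_{DL}(X_0^T,V,U)$. Your index bookkeeping is consistent with the paper's convention (covariance of $\text{vec}(X)$ equal to $V\otimes U$, among-column first), and closing the equivalence by involutivity of transposition is fine; nothing further is needed.
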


\begin{prop}
If $X \sim \mathcal{MN}_{LD} (X_0, U, V)$, the following properties hold for $A \in \mathcal{M}_{r,D} (\mathbb{R})$ and $B\in \mathcal{M}_{L,s} (\mathbb{R})$
\begin{align*}
AXB &\sim \mathcal{MN}_{rs} (A X_0 B, A U A^T, B^T V B)\\
\text{vec}(AXB) &= (B^T \otimes A) \text{vec}(X)
\end{align*}

For $A \in \mathcal{M}_{r,D} (\mathbb{R}), B\in \mathcal{M}_{L,s} (\mathbb{R}), C \in \mathcal{M}_{r,D} (\mathbb{R}), D\in \mathcal{M}_{L,s} (\mathbb{R})$, the following holds:
\begin{align*}
Cov(\text{vec}(AXB), \text{vec}(CXD)) &= (B^TVD \otimes AUC^T)\\
Cov(\text{vec}(AXB), \text{vec}(CX^TD)) &= (B^T\otimes A) E(\text{vec}(X) \text{vec}(X)^T) T_{LD}^{-1} (D^T \otimes C) \\
&= (B^T V \otimes A U)  T_{LD}^{-1} (D^T \otimes C) \text{ for }X \text{ centered} 
\end{align*}
where $T_{LD}$ is the commutation matrix, transforming the vectorized form of a matrix of size $L \times D$ into the vectorized form of its transpose.
\end{prop}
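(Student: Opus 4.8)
The plan is to transfer everything to the vectorized level, where a matrix normal variable is, by definition, an ordinary multivariate Gaussian, and then to rely repeatedly on the mixed-product rule for the Kronecker product, $(P\otimes Q)(R\otimes S)=(PR)\otimes(QS)$ whenever the matrix products make sense. The single identity that carries the argument is $\text{vec}(AXB)=(B^{\top}\otimes A)\,\text{vec}(X)$, so I would prove it first. By bilinearity in $X$ it is enough to check it on the rank-one matrices $X=e_{i}e_{j}^{\top}$: there $AXB=(Ae_{i})(B^{\top}e_{j})^{\top}$, hence $\text{vec}(AXB)=(B^{\top}e_{j})\otimes(Ae_{i})$ by the elementary relation $\text{vec}(uv^{\top})=v\otimes u$, while $(B^{\top}\otimes A)\,\text{vec}(e_{i}e_{j}^{\top})=(B^{\top}\otimes A)(e_{j}\otimes e_{i})=(B^{\top}e_{j})\otimes(Ae_{i})$ by the mixed-product rule; alternatively the identity is recorded in \cite{GNbook}.

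Granting this identity, the matrix normal transformation law drops out. Since $X\sim\mathcal{MN}_{LD}(X_{0},U,V)$ means exactly $\text{vec}(X)\sim\mathcal{N}_{LD}(\text{vec}(X_{0}),V\otimes U)$, the vector $\text{vec}(AXB)=(B^{\top}\otimes A)\,\text{vec}(X)$ is an affine image of a Gaussian vector, so it is Gaussian with mean $(B^{\top}\otimes A)\,\text{vec}(X_{0})=\text{vec}(AX_{0}B)$ and covariance $(B^{\top}\otimes A)(V\otimes U)(B^{\top}\otimes A)^{\top}=(B^{\top}\otimes A)(V\otimes U)(B\otimes A^{\top})$, which two passes of the mixed-product rule reduce to $(B^{\top}VB)\otimes(AUA^{\top})$. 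By the definition of the matrix normal distribution this is precisely $AXB\sim\mathcal{MN}_{rs}(AX_{0}B,AUA^{\top},B^{\top}VB)$, and the vectorization formula is exactly the identity we used along the way.

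For the two covariance statements I would again vectorize, writing $\text{vec}(AXB)=(B^{\top}\otimes A)\,\text{vec}(X)$ and $\text{vec}(CXD)=(D^{\top}\otimes C)\,\text{vec}(X)$, and then use bilinearity of the covariance, $Cov(Mu,Nu)=M\,Cov(u)\,N^{\top}$, together with $Cov(\text{vec}(X))=V\otimes U$; the mixed-product rule collapses $(B^{\top}\otimes A)(V\otimes U)(D\otimes C^{\top})$ to $(B^{\top}VD)\otimes(AUC^{\top})$. The transpose version needs one additional input, the defining property of the commutation matrix $\text{vec}(X^{\top})=T_{LD}^{-1}\text{vec}(X)$, which turns $\text{vec}(CX^{\top}D)=(D^{\top}\otimes C)\,\text{vec}(X^{\top})$ into $(D^{\top}\otimes C)\,T_{LD}^{-1}\,\text{vec}(X)$; substituting this and $E(\text{vec}(X)\text{vec}(X)^{\top})=V\otimes U$ for centered $X$ yields the stated form. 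I do not expect a genuine obstacle here: the only delicate point is the bookkeeping of transposes and the exact orientation convention for $T_{LD}$ (a permutation matrix, so that $T_{LD}^{-1}=T_{LD}^{\top}$), which is where an index slip would most easily creep in.
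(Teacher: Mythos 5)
Your argument is correct and it is the standard one: push everything down to the vectorized level via $\mathrm{vec}(AXB)=(B^{\top}\otimes A)\,\mathrm{vec}(X)$ (checked on rank-one matrices), then apply the affine transformation law for Gaussian vectors and the mixed-product rule for Kronecker products. The paper gives no proof of this proposition at all; it recalls it from Gupta and Nagar, Chapter 2, and your derivation is exactly the one that reference records, so there is nothing to compare at the level of method. Two bookkeeping points are worth making explicit, both of which your sketch would surface if written out in full. First, for $AXB$ to be defined with $X\in\mathbb{R}^{L\times D}$ one needs $A\in\mathcal{M}_{r,L}(\mathbb{R})$ and $B\in\mathcal{M}_{D,s}(\mathbb{R})$; the dimensions in the statement are swapped, and your rank-one computation implicitly uses the correct ones. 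Second, in the last identity, carrying out $Cov(Mu,Nu)=M\,E(uu^{\top})\,N^{\top}$ with $N=(D^{\top}\otimes C)\,T_{LD}$ (the paper's stated convention $\mathrm{vec}(X^{\top})=T_{LD}\,\mathrm{vec}(X)$) yields the trailing factor $T_{LD}^{-1}(D\otimes C^{\top})$, i.e.\ the transpose of the factor $(D^{\top}\otimes C)$ displayed in the statement; with your opposite convention $\mathrm{vec}(X^{\top})=T_{LD}^{-1}\,\mathrm{vec}(X)$ the middle factor comes out as $T_{LD}$ rather than $T_{LD}^{-1}$. You correctly flag this orientation issue as the only delicate point; the residual mismatch with the displayed formula is a transpose slip in the statement itself, not a gap in your proof.
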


\subsection[Distribution of  covariance matrices]{Distribution of matrices $\widehat{\Gammab}^\star$ and $\widehat{\Sigmab}^\star$}
 \label{gammabstar}

 In this section,  distributiond of the predictors empirical covariance matrix $\widehat{\Gammab}^\star$  and the residual covaiance matrix $\widehat{\Sigmab}^\star $ are studied.
 
 As $\Sigmab$ and  $\Gammab$ are supposed to be known,  an estimator of  $\widehat{\Gammab}^\star$ is deduced by pluging-in the estimator of $\Ab$ as followed:
 
$$\widehat{\Gammab}^\star =\Sigmab+\widehat{\Ab}\Gammab\widehat{\Ab}^{\top}.$$

The probability density function of $\widehat{\Gammab}^\star$ is derived in the following theorem.

\begin{theorem}[Distribution of $\widehat{\Gammab}^\star$ 
]
Suppose $((\Xb_1,\Yb_1),\ldots, (\Xb_N,\Yb_N))$ is a sequence of $iid$ random variables  satisfying the model defined in Equations \eqref{eq:model::X}-\eqref{eq:model::Y|X}.
Suppose that $\widehat{\Gammab}^\star$ is decomposed as  $\Sigmab+\widehat{\Ab}\Gammab\widehat{\Ab}^{\top}$ where $\widehat{\Ab}$ is the estimator defined Equation~\eqref{AChap}, then the probability density function of $\widehat{\Gammab}^\star$ is defined as,
for symmetric definite positive matrices structured as the sum of a diagonal and a low rank matrix: 
\begin{align*}
& \pi^\frac{-DL+L^2}{2} \{ 2^{\frac12DL}\Gamma_L\left(\frac12L\right) \} ^{-1} \text{det}(\Sigmab)^{-\frac12L}\text{det}(\Bb)^{-\frac12D}\text{etr}\left( -\frac12 \Sigmab^{-1} \Ab \Yv^T\Yv\Ab^T\right) \nonumber \\
& \text{etr}\left( -\frac12q \Sigmab (\Gammab^\star - \Sigmab) \right) \left( \prod_{\lambda >0} \lambda(\Gammab^\star - \Sigmab)\right)^{\frac12(L-D-1)} \sum_{k=0}^\infty\sum_{\kappa} \frac{1}{(\frac12L)_\kappa k!} \nonumber \\
& P_\kappa\left( \frac{1}{\sqrt{2}}\Sigmab^{-\frac12} \Ab (\Yv^T\Yv)^{\frac12} (\mathbb I_L -q\Bb)^{-\frac12},\Bb^{-1}-q\mathbb I_L,\frac12 \Sigmab^{-\frac12} (\Gammab^\star - \Sigmab) \Sigmab^{-\frac12}\right) \nonumber
\end{align*}
 where $\lambda(A)$ corresponds to the eigenvalues of $A$ and $\Bb = (\Yv^T\Yv)^{-\frac12} \Gammab (\Yv^T\Yv)^{-\frac12} $, and $q > 0$ an arbitrary constant such that $\mathbb I_L -q\Bb$ is positive definite, and $\Gamma_L(\cdot)$, $\text{etr}(\cdot)$ and the Hayakawa polynomial $P_\kappa(\cdot,\cdot,\cdot)$ defined as in Appendix~\ref{sec:notations}.
\label{theoDistribGammaStar}
\end{theorem}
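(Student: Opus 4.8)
The plan is to express $\widehat{\Gammab}^\star = \Sigmab + \widehat{\Ab}\Gammab\widehat{\Ab}^\top$ as a quadratic form in a matrix-normal variable and then invoke the known density of such noncentral Wishart-type quadratic forms (the Hayakawa/Shah form), which is where the Hayakawa polynomial $P_\kappa$ enters. First I would identify the distribution of $\widehat{\Ab}$. From $\widehat{\Ab} = (\Yb^\top\Yb)^{-1}\Yb^\top\Xb$ and the generative model \eqref{eq:model::X}-\eqref{eq:model::Y|X}, conditionally on $\Yb$ one has $\widehat{\Ab}^\top = \Xb^\top \Yb (\Yb^\top\Yb)^{-1}$; since the rows of $\Xb^\top$ (i.e.\ the $\Xv_i$) given $\Yb$ are independent Gaussians (here I would use the inverse-regression reparametrization via $\Psi$ so that $\Xv_i \mid \Yv_i \sim \mathcal N_D(\Ab\Yv_i, \Sigmab)$ with $\Sigmab$ diagonal), $\widehat{\Ab}$ is matrix-normal. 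Using Proposition~2 (the $AXB$ rule) I would compute its mean $\Ab$ and its two covariance factors: among-column $\Gammab^{-1}$-type factor $(\Yb^\top\Yb)^{-1}$ and among-row factor $\Sigmab$, i.e.\ $\widehat{\Ab}\mid\Yb \sim \mathcal{MN}_{DL}(\Ab,\, \Sigmab,\, (\Yb^\top\Yb)^{-1})$.

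Next I would write $\widehat{\Gammab}^\star - \Sigmab = \widehat{\Ab}\Gammab\widehat{\Ab}^\top = \Wb\Wb^\top$ after a whitening change of variables. Concretely, set $\Zb = \Sigmab^{-1/2}\widehat{\Ab}(\Yb^\top\Yb)^{1/2}$; then conditionally on $\Yb$, $\Zb$ is matrix-normal with identity covariance factors and mean $\Sigmab^{-1/2}\Ab(\Yb^\top\Yb)^{1/2}$, and $\Sigmab^{-1/2}(\widehat{\Gammab}^\star-\Sigmab)\Sigmab^{-1/2} = \Zb\, \Bb\, \Zb^\top$ with $\Bb = (\Yb^\top\Yb)^{-1/2}\Gammab(\Yb^\top\Yb)^{-1/2}$ as defined in the statement. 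This is exactly a noncentral quadratic form $\Zb\Bb\Zb^\top$ in a standardized Gaussian matrix $\Zb$ with an arbitrary positive-definite weight $\Bb$; its density is classical (Hayakawa, see the Appendix referenced) and is given by a series in zonal-type (Hayakawa) polynomials $P_\kappa$, with the arbitrary constant $q$ appearing in the standard device of choosing $q$ so that $\mathbb I_L - q\Bb \succ 0$ to make the generating integral converge. I would then substitute this density into the change-of-variables formula for the affine map $\Mb \mapsto \Sigmab^{1/2}\Mb\Sigmab^{1/2} + \Sigmab$ from symmetric $L\times L$ matrices to symmetric matrices of the form diagonal-plus-low-rank, which produces the Jacobian factor $\det(\Sigmab)^{-(L+1)/2}$ absorbed into the $\det(\Sigmab)^{-L/2}$ and the Wishart-type eigenvalue factor $\bigl(\prod_{\lambda>0}\lambda(\Gammab^\star-\Sigmab)\bigr)^{(L-D-1)/2}$. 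The $\text{etr}$ terms come from completing the square: $\text{etr}(-\tfrac12\Sigmab^{-1}\Ab\Yb^\top\Yb\Ab^\top)$ is the noncentrality normalizer and $\text{etr}(-\tfrac12 q\Sigmab(\Gammab^\star-\Sigmab))$ is the exponential weight introduced by the $q$-device; the arguments of $P_\kappa$ are read off as the (scaled) noncentrality matrix $\tfrac{1}{\sqrt 2}\Sigmab^{-1/2}\Ab(\Yb^\top\Yb)^{1/2}(\mathbb I_L - q\Bb)^{-1/2}$, the matrix $\Bb^{-1}-q\mathbb I_L$, and the argument $\tfrac12\Sigmab^{-1/2}(\Gammab^\star-\Sigmab)\Sigmab^{-1/2}$.

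The main obstacle is not the matrix-normal bookkeeping — that is routine via Propositions~1 and~2 — but rather correctly invoking and instantiating the density of the noncentral weighted quadratic form $\Zb\Bb\Zb^\top$ when $\Bb$ is a general positive-definite matrix (not a multiple of the identity), which is genuinely the Hayakawa expansion and forces the introduction of the auxiliary constant $q$; care is needed to track all normalizing constants ($\pi$-powers, $\Gamma_L$, the $(\tfrac12 L)_\kappa k!$ denominators) through this step and through the final affine change of variables, and to verify the support statement (that $\Gammab^\star - \Sigmab$ is positive definite of rank $\le L$, hence $L - D$ of its eigenvalues vanish when $D > L$, which is why the product is taken only over $\lambda > 0$). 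I would therefore structure the proof as: (i) conditional matrix-normality of $\widehat{\Ab}$ given $\Yb$; (ii) whitening to the standardized quadratic form $\Zb\Bb\Zb^\top$; (iii) citation and instantiation of the Hayakawa density for this form (Appendix~\ref{sec:notations}); (iv) affine change of variables to $\Gammab^\star$ with the Jacobian and eigenvalue factors; and (v) noting that the $\Yb$-dependence is retained as stated (the density is conditional on $\Yb^\top\Yb$ through $\Bb$), or integrating it out if an unconditional statement is intended.
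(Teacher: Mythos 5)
Your proposal follows essentially the same route as the paper's proof: establish the matrix-normal distribution of $\widehat{\Ab}$ given $\Yb$ (the paper's Proposition~\ref{distribAchap}), view $\widehat{\Ab}\Gammab\widehat{\Ab}^\top$ as a noncentral weighted quadratic form in a Gaussian matrix whose density is given by the Hayakawa expansion (the paper cites the quadratic-form results of Gupta and Nagar), and account for the rank-$L$ singularity of the $D\times D$ matrix $\Gammab^\star-\Sigmab$ via the singular-Wishart support (the paper cites Uhlig), which explains the product over positive eigenvalues only. Your write-up is in fact more explicit than the paper's (which is essentially a citation), and the only slip is cosmetic: the final change of variables acts on $D\times D$ rank-$L$ symmetric matrices, not $L\times L$ ones.
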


Note that this distribution is related to a Wishart distribution with a rescaling related to $\Gammab$ and a translation of $\Sigmab$.
The proof is available in Appendix \ref{proofDistribGammaStar} and mainly uses the law of the unconscious statistician and matricial computation.

Note that response and covariates play a symmetric role in inverse regression as their role are inverted for estimation. However, interestingly, the following theorem involves a standard Wishart-like distribution while the previous one involves a singular Wishart-like distribution even if they consist in finding the distribution of matrices with similar decomposition.

In the same way, the density distribution of residual empirical covariance matrix $\widehat{\Sigmab}^\star$ is deduced.

\begin{theorem}[Distribution of $\widehat{\Sigmab}^\star$]
\label{theoDistribSigmaStar}
Suppose $((\Xb_1,\Yb_1),\ldots, (\Xb_N,\Yb_N))$ is a sequence of $iid$ random variables  satisfying the model defined in Equations \eqref{eq:model::X} and \eqref{eq:model::Y|X}.
Suppose that $\widehat{\Sigmab}^\star$ is decomposed as  $( \Gammab^{-1}+\widehat{\Ab}^{\top} \Sigmab^{-1}\widehat{\Ab})^{-1}$ where $\widehat{\Ab}$ is the estimator defined Equation~\eqref{AChap}, then the probability density function of $\widehat{\Sigmab}^\star$ is defined as:
 \begin{align*}
 & \{ 2^{\frac12DL}\Gamma_L\left(\frac12D\right) \} ^{-1} \text{det}((\Yv^T\Yv)^{-1})^{-\frac12D}\text{det}(\Sigmab^\star)^{(L+1)} \nonumber \\
 &\text{etr}\left( -\frac12 (\Yv^T\Yv) \Ab^T \Sigmab^{-1}\Ab\right) \nonumber \text{etr}\left( -\frac12q \Yv^T\Yv \left((\Sigmab^\star)^{-1} - \Gammab^{-1}\right)\right)  \\
 &\text{det}((\Sigmab^\star)^{-1} - \Gammab^{-1})^{\frac12(D-L-1)} \sum_{k=0}^\infty\sum_{\kappa} \frac{1}{(\frac12D)_\kappa k!} \nonumber \\
& P_\kappa\left( \frac{(1-q)^{-\frac12}}{\sqrt{2}}(\Yv^T\Yv)^\frac12 \Ab \Sigmab^{-\frac12},(1-q)\mathbb I_D,\frac12 (\Yv^T\Yv)^\frac12 ((\Sigmab^\star)^{-1} - \Gammab^{-1}) (\Yv^T\Yv)^\frac12\right) \nonumber
 \end{align*}
 where $\Gamma_L(\cdot)$ is the multivariate gamma function, $\text{etr}(\cdot)$ is the exponential of the trace of a matrix and $P_\kappa(\cdot,\cdot,\cdot)$ is the generalized Hayakawa polynomial. These notations are more precisely defined in Appendix~\ref{sec:notations}. 
\end{theorem}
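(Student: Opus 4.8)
The plan is to mirror the argument used for $\widehat{\Gammab}^\star$ in Theorem~\ref{theoDistribGammaStar}, exploiting the symmetry between response and covariates in the inverse regression. First I would note that, conditionally on $\Yv$, the estimator $\widehat{\Ab}=(\Yv^\top\Yv)^{-1}\Yv^\top\Xv$ is a linear function of $\Xv$, hence matrix normal: since $\Xv_i\vert\Yv_i=\Ab\Yv_i+\mathbf{e}_i$ with $\mathbf{e}_i\sim\mathcal{N}_D(\mathbf{0},\Sigmab)$, we have $\Xv^\top\vert\Yv\sim\mathcal{MN}_{ND}(\Yv^\top\Ab^\top,\Ib_N,\Sigmab)$, and applying Proposition~2 with the linear map $(\Yv^\top\Yv)^{-1}\Yv^\top$ gives $\widehat{\Ab}^\top\vert\Yv\sim\mathcal{MN}_{LD}(\Ab^\top,(\Yv^\top\Yv)^{-1},\Sigmab)$, equivalently $\widehat{\Ab}\vert\Yv\sim\mathcal{MN}_{DL}(\Ab,\Sigmab,(\Yv^\top\Yv)^{-1})$. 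Next I would rescale: form $\Wv:=\Sigmab^{-1/2}\widehat{\Ab}(\Yv^\top\Yv)^{1/2}$, which is $\mathcal{MN}_{DL}(\Sigmab^{-1/2}\Ab(\Yv^\top\Yv)^{1/2},\Ib_D,\Ib_L)$, i.e.\ has i.i.d.\ standard normal entries up to a nonzero mean $M:=\Sigmab^{-1/2}\Ab(\Yv^\top\Yv)^{1/2}$.

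The key observation is that, conditionally on $\Yv$,
\[
(\Yv^\top\Yv)^{1/2}\bigl((\Sigmab^\star)^{-1}-\Gammab^{-1}\bigr)(\Yv^\top\Yv)^{1/2}
=(\Yv^\top\Yv)^{1/2}\,\widehat{\Ab}^\top\Sigmab^{-1}\widehat{\Ab}\,(\Yv^\top\Yv)^{1/2}=\Wv^\top\Wv,
\]
so this quantity follows a \emph{noncentral Wishart} distribution $\mathcal{W}_L(D,\Ib_L,M^\top M)$ — and here, unlike the $\widehat{\Gammab}^\star$ case, $\Wv$ is $D\times L$ with $D\ge L$ (the relevant regime), so the Wishart is nonsingular, which is why the statement features $\det(\cdot)^{\frac12(D-L-1)}$ rather than a product over positive eigenvalues. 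I would then invoke the known density of the noncentral Wishart in the form involving the generalized Hayakawa polynomial $P_\kappa$ (the representation recalled in Appendix~\ref{sec:notations}); the arbitrary constant $q$ with $(1-q)\Ib_D$ positive definite enters through the standard ``$q$-shifted'' representation of that density, exactly as in the previous theorem. Writing the density of $\Wv^\top\Wv$ at the point $\Sv:=(\Yv^\top\Yv)^{1/2}((\Sigmab^\star)^{-1}-\Gammab^{-1})(\Yv^\top\Yv)^{1/2}$ produces the factors $\{2^{\frac12DL}\Gamma_L(\frac12D)\}^{-1}$, $\det(\Sv)^{\frac12(D-L-1)}$, $\operatorname{etr}(-\frac12 M^\top M)=\operatorname{etr}(-\frac12(\Yv^\top\Yv)\Ab^\top\Sigmab^{-1}\Ab)$, the $q$-shift term $\operatorname{etr}(-\frac12 q\,\Yv^\top\Yv((\Sigmab^\star)^{-1}-\Gammab^{-1}))$, and the $P_\kappa$ sum with arguments $\tfrac{(1-q)^{-1/2}}{\sqrt2}(\Yv^\top\Yv)^{1/2}\Ab\Sigmab^{-1/2}$, $(1-q)\Ib_D$, and $\tfrac12\Sv$.

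Finally I would push this through the change of variables $\Sv\mapsto\widehat{\Sigmab}^\star$, i.e.\ compose the maps $\widehat{\Ab}^\top\Sigmab^{-1}\widehat{\Ab}=(\Yv^\top\Yv)^{-1/2}\Sv(\Yv^\top\Yv)^{-1/2}\mapsto\Gammab^{-1}+(\cdot)\mapsto(\cdot)^{-1}=\widehat{\Sigmab}^\star$: the affine shift by $\Gammab^{-1}$ has unit Jacobian, the symmetric congruence by $(\Yv^\top\Yv)^{-1/2}$ contributes $\det((\Yv^\top\Yv)^{-1})^{\frac{L+1}{2}}$ — accounting for the $\det((\Yv^\top\Yv)^{-1})^{-\frac12 D}$ and part of the matrix-inversion Jacobian — and the matrix inversion $\Mb\mapsto\Mb^{-1}$ on $L\times L$ symmetric matrices has Jacobian $\det(\Mb)^{-(L+1)}=\det(\widehat{\Sigmab}^\star)^{L+1}$, which supplies the stated $\det(\Sigmab^\star)^{(L+1)}$ factor. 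Since $\Sigmab$ and $\Gammab$ are fixed (known) and all of the above is conditional on $\Yv$, but every expression on the right-hand side depends on $\Yv$ only through $\Yv^\top\Yv$, which appears explicitly, the conditional density \emph{is} the stated density and no integration over $\Yv$ is needed. The main obstacle I anticipate is purely bookkeeping: getting the exponents of $\det(\Yv^\top\Yv)$ and $\det(\widehat{\Sigmab}^\star)$ exactly right by carefully tracking the Jacobians of the congruence and inversion steps (and reconciling the $\det(\Sv)^{\frac12(D-L-1)}$ Wishart factor with them), together with verifying that the three arguments of $P_\kappa$ transform consistently under the rescaling — these are exactly the points where the analogous computation for $\widehat{\Gammab}^\star$ is delicate, and the asymmetry $D\ge L$ versus the singular case there must be handled with care.
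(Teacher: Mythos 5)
Your plan is correct and follows essentially the same route as the paper: both start from $\widehat{\Ab}\sim\mathcal{MN}_{DL}(\Ab,\Sigmab,(\Yv^\top\Yv)^{-1})$, obtain the noncentral-Wishart-type density of the quadratic form $\widehat{\Ab}^\top\Sigmab^{-1}\widehat{\Ab}$ in the Hayakawa-polynomial/$q$-shift representation (the paper quotes this directly from Gupta--Nagar, Ch.~7, where you re-derive it by standardizing to $\Wv^\top\Wv$), and then apply the unit-Jacobian shift by $\Gammab^{-1}$ followed by matrix inversion with Jacobian $\det(\Sigmab^\star)^{L+1}$. Your bookkeeping checks out as well, since $\tfrac12(D-L-1)+\tfrac12(L+1)=\tfrac12 D$ reconciles the congruence Jacobian with the $\det(\Yv^\top\Yv)^{\frac12 D}$ factor in the stated density.
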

Proof is available in Appendix \ref{proofDistribSigmaStar} with a similar approach of Theorem~\ref{theoDistribGammaStar}. Note that confidence interval for covariance matrices $\Gammab^\star$ and $\Sigmab^\star$ can be derived as the exact distribution of their estimators are known using the previous theorems. Moreover, the exact distribution of $\widehat{\Ab}$ and $\widehat{\Sigmab^\star}$ are known making the exact distribution of $\widehat{\Ab^\star}$ accessible. However, computing this distribution is strong analytically and algorithmically, so in the following section, we focus on the asymptotic normality of $\widehat{\Ab^\star}$.

\subsection[Asymptotic normality of regression coefficients]{Asymptotic normality of $\widehat{\Ab}^\star$}

In order to derive the asymptotic normality of the forward regression coefficients $\widehat{\Ab}^\star$, the distribution of the inverse regression coefficients matrix $\widehat{\Ab}$ is described at first.
\begin{prop}[Distribution of $\widehat{\Ab}$]
Suppose $((\Xb_1,\Yb_1),\ldots, (\Xb_N,\Yb_N))$ is a sequence of $iid$ random variables  satisfying the model defined in Equations \eqref{eq:model::X} and \eqref{eq:model::Y|X}, or equivalently in Equations \eqref{eq:model::Y} and \eqref{eq:model::X|Y}.
Then, 
\label{distribAchap}
$$\widehat{\Ab} \sim \mathcal{MN}_{DL} (\Ab, \Sigmab, (\Yv^T \Yv)^{-1}). $$ 
\end{prop}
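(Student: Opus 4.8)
\textbf{Proof plan for Proposition~\ref{distribAchap}.}
The plan is to work conditionally on $\Yv$ and then observe that the resulting distribution does not in fact depend on $\Yv$. First I would recall that, under the inverse model of Equations~\eqref{eq:model::Y}--\eqref{eq:model::X|Y}, conditionally on $\Yv$ the columns $\Xv_i \vert \Yv_i$ are independent with $\Xv_i \vert \Yv_i \sim \mathcal{N}_D(\Ab \Yv_i, \Sigmab)$. Stacking these columns, the matrix $\Xb \in \mathbb{R}^{D\times N}$ satisfies $\Xb \vert \Yv \sim \mathcal{MN}_{DN}(\Ab \Yv, \Sigmab, \mathbb{I}_N)$, since the noise $\mathbf{e}$ has independent $\mathcal{N}_L(\mathbf 0,\Sigmab)$ columns (here I use $\Yv$ for the $L\times N$ response matrix, consistent with the notation $\Yb^\top\Yb$ appearing in Equation~\eqref{AChap}).

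Next I would write $\widehat{\Ab}^\top = (\Yv \Yv^\top)^{-1}\Yv \Xb^\top$ from Equation~\eqref{AChap}, i.e. $\widehat{\Ab}^\top = A \Xb^\top B$ with $A = (\Yv\Yv^\top)^{-1}\Yv \in \mathcal{M}_{L,N}(\mathbb{R})$ and $B = \mathbb{I}_D$, and apply the affine-transformation property of the matrix normal distribution stated in the second Proposition above (the rule $AXB \sim \mathcal{MN}(AX_0B, AUA^\top, B^\top VB)$), using that $\Xb^\top \vert \Yv \sim \mathcal{MN}_{ND}((\Ab\Yv)^\top, \mathbb{I}_N, \Sigmab)$ by the transpose property. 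This yields, conditionally on $\Yv$,
\begin{align*}
\widehat{\Ab}^\top \vert \Yv \;\sim\; \mathcal{MN}_{LD}\!\left( A(\Ab\Yv)^\top,\; A\mathbb{I}_N A^\top,\; \Sigmab \right),
\end{align*}
and one computes $A(\Ab\Yv)^\top = (\Yv\Yv^\top)^{-1}\Yv\Yv^\top\Ab^\top = \Ab^\top$ and $AA^\top = (\Yv\Yv^\top)^{-1}\Yv\Yv^\top(\Yv\Yv^\top)^{-1} = (\Yv\Yv^\top)^{-1}$. Hence $\widehat{\Ab}^\top \vert \Yv \sim \mathcal{MN}_{LD}(\Ab^\top, (\Yv\Yv^\top)^{-1}, \Sigmab)$, and transposing back (first Proposition) gives $\widehat{\Ab}\vert\Yv \sim \mathcal{MN}_{DL}(\Ab, \Sigmab, (\Yv\Yv^\top)^{-1})$.

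Finally I would note that this conditional law is exactly the statement to be proved, written with $\Yv^\top\Yv$ in place of $\Yv\Yv^\top$ according to whether $\Yv$ denotes the $L\times N$ or $N\times L$ arrangement of the responses; since the claimed distribution is stated conditionally through its dependence on $\Yv^\top\Yv$, no marginalization is needed and the proof is complete. The only genuinely delicate point is bookkeeping: one must be careful and consistent about the orientation of the data matrices ($D\times N$ versus $N\times D$, and $L\times N$ versus $N\times L$) and correspondingly about which Kronecker factor is the among-row and which the among-column variance, so that the roles of $\Sigmab$ and $(\Yv^\top\Yv)^{-1}$ come out on the correct side; everything else is a direct application of the two matrix-normal propositions recalled above.
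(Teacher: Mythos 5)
Your argument is correct and is precisely the ``straightforward'' extension of the classical least-squares distribution that the paper invokes without writing out: condition on $\Yv$, use the affine-transformation and transpose properties of the matrix normal, and observe that the result depends on $\Yv$ only through $\Yv^\top\Yv$ (the statement is indeed to be read conditionally on the design, as the paper's later Wishart-type densities confirm). The only blemish is the typo $\mathcal{N}_L(\mathbf 0,\Sigmab)$ for the columns of $\mathbf{e}$, which are $D$-dimensional, so it should read $\mathcal{N}_D(\mathbf 0,\Sigmab)$; otherwise the bookkeeping you flag is handled consistently.
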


This result is an extension of the least square estimator in the multivariate linear model to the multiple multivariate linear model. The proof is straightforward.

From this, we derive the asymptotic normality of $\widehat{\Ab}^\star$.
A matricial version of the $\Delta$-method is used,
which involves the differential of the function $g:\Ab \mapsto \Ab^\star$ and the corresponding asymptotic variance of $\widehat\Ab^\star$. They are first computed in the following lemma.

\begin{lemma}
\label{computeDiff}
Suppose $((\Xb_1,\Yb_1),\ldots, (\Xb_N,\Yb_N))$ is a sequence of $iid$ random variables  satisfying the model defined in Equations \eqref{eq:model::X} and \eqref{eq:model::Y|X}.
Let 
\begin{align}
g : \mathbb{R}^{D \times L} &\rightarrow \mathbb{R}^{L \times D} \nonumber\\
 \Ab &\mapsto \Ab^\star =  \Sigmab^\star\Ab^T\Sigmab^{-1} = (\Gammab^{-1} + \Ab^T\Sigmab^{-1}\Ab)^{-1}\Ab^T\Sigmab^{-1}\label{diff}
\end{align}
Then the differential of this function at point $(\widehat \Ab - \Ab)$ is,
\begin{align}
    Dg&(\Ab) . (\widehat \Ab - \Ab) = \label{eq:diffg} \\
    & \Sigmab^\star (\widehat \Ab - \Ab)^T\Sigmab^{-1} - \Sigmab^\star (\widehat \Ab - \Ab)^T\Sigmab^{-1} \Ab \Ab^\star - \Ab^\star (\widehat \Ab - \Ab) \Ab^\star. \nonumber
\end{align}
Moreover, the covariance of this random matrix is given by the following:
\begin{align}
Cov&(\text{vec}(Dg(\Ab) . (\widehat \Ab - \Ab))) 
= \label{covVecDiff} \\
& \left((\Sigmab^{-1} + (\Ab^\star)^T \Ab^T \Sigmab^{-1} \Ab \Ab^\star -2 \Sigmab^{-1} \Ab \Ab^\star)\otimes \Sigmab^\star \Gammab \Sigmab^\star\right) \nonumber\\
&+ ((\Ab^\star)^T \Gammab \Ab^\star \otimes \Ab^\star \Sigmab (\Ab^\star)^T ) \nonumber\\
&-2 \left(
(\mathbf{I} \otimes \Sigmab^\star \Gammab)+
((\Ab^\star)^T \Ab^T \otimes \Sigmab^\star \Gammab) \right)  T^{-1}_{LD}ÃÂ  ((\Ab^\star)^T \otimes \Ab^\star).\nonumber
\end{align}

\end{lemma}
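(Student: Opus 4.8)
Here is the plan I would follow for the two claims of the lemma.

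\emph{Step 1 (the differential).} The plan is to differentiate $g$ directly with the rules of matrix calculus. Set $\Sigmab^\star(\Ab)=(\Gammab^{-1}+\Ab^\top\Sigmab^{-1}\Ab)^{-1}$, so that $g(\Ab)=\Sigmab^\star(\Ab)\,\Ab^\top\Sigmab^{-1}$. Applying the product rule together with $d(M^{-1})=-M^{-1}(dM)M^{-1}$ and $d(\Ab^\top\Sigmab^{-1}\Ab)=(d\Ab)^\top\Sigmab^{-1}\Ab+\Ab^\top\Sigmab^{-1}(d\Ab)$ yields
\[
Dg(\Ab).(d\Ab)=-\Sigmab^\star\big((d\Ab)^\top\Sigmab^{-1}\Ab+\Ab^\top\Sigmab^{-1}(d\Ab)\big)\Sigmab^\star\Ab^\top\Sigmab^{-1}+\Sigmab^\star(d\Ab)^\top\Sigmab^{-1}.
\]
Substituting $d\Ab=\widehat\Ab-\Ab$ and using the identity $\Sigmab^\star\Ab^\top\Sigmab^{-1}=\Ab^\star$, which occurs twice in the display, collapses this to exactly \eqref{eq:diffg}. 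This step is a routine computation.

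\emph{Step 2 (the covariance).} For the second formula I would vectorise \eqref{eq:diffg}. With $\text{vec}(AXB)=(B^\top\otimes A)\,\text{vec}(X)$ and the commutation matrix $T_{LD}$ (and its inverse $T_{LD}^{-1}$) used to rewrite $\text{vec}\big((\widehat\Ab-\Ab)^\top\big)$ in terms of $\text{vec}(\widehat\Ab-\Ab)$, the vector $\text{vec}\big(Dg(\Ab).(\widehat\Ab-\Ab)\big)$ becomes a deterministic linear map applied to $\text{vec}(\widehat\Ab-\Ab)$, whose law — in particular its covariance — is supplied by Proposition~\ref{distribAchap}, from which the $\Gammab$-dependent factors of \eqref{covVecDiff} arise. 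It then remains to expand $Cov$ of the sum of the three terms of \eqref{eq:diffg} into its nine contributions. The three "diagonal" contributions and the cross term between the first two terms (all of the shape $\Sigmab^\star(\widehat\Ab-\Ab)^\top(\,\cdot\,)$) are handled with the identity $Cov(\text{vec}(AXB),\text{vec}(CXD))=B^\top V D\otimes A U C^\top$; the cross terms coupling one of the first two terms with the last term $\Ab^\star(\widehat\Ab-\Ab)\Ab^\star$ require instead the mixed formula $Cov(\text{vec}(AXB),\text{vec}(CX^\top D))=(B^\top V\otimes A U)\,T_{LD}^{-1}\,(D^\top\otimes C)$ for centred $X$, both recorded in the matrix-normal Proposition above. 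Grouping the first four contributions produces the first line of \eqref{covVecDiff} (with common right factor $\otimes\,\Sigmab^\star\Gammab\Sigmab^\star$), the square of the third term produces the second line, and the remaining two cross terms produce the third line, with the common right factor $T_{LD}^{-1}\big((\Ab^\star)^\top\otimes\Ab^\star\big)$.

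\emph{Main obstacle.} The only step that is not purely mechanical is invoking the mixed covariance formula relating $\text{vec}(X)$ and $\text{vec}(X^\top)$; everything else is bookkeeping, but that bookkeeping is where the work lies. One must keep every Kronecker product in the right order, insert $T_{LD}$ (respectively $T_{LD}^{-1}$) exactly at the places where a transpose appears, and track the signs carefully so that the six cross terms coalesce into the two compact expressions of the last two lines of \eqref{covVecDiff}. I would organise the computation as a $3\times3$ table of $\text{vec}$-covariances so that no term is dropped.
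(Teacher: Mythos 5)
Your proposal is correct and follows essentially the same route as the paper: the differential is obtained by the product rule together with $d(M^{-1})=-M^{-1}(dM)M^{-1}$ (the paper phrases this via a first-order Neumann expansion of the inverse, which is the same computation), and the covariance is obtained by expanding the variance of the three-term sum and applying the two matrix-normal covariance identities of Proposition~\ref{distribAchap}, with $T_{LD}^{-1}$ entering exactly in the cross terms coupling $\widehat\Ab-\Ab$ with its transpose. Nothing further is needed.
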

Proof of Lemma  \ref{computeDiff} is given in Appendix \ref{proofLemma1}.

Finally,  the following theorem, which is the key of this paper, details the distribution of $\widehat{\Ab}^\star$.
\begin{theorem}[Asymptotic normality of $\widehat{\Ab}^\star$]
Suppose $((\Xb_1,\Yb_1),\ldots, (\Xb_N,\Yb_N))$ is a sequence of $iid$ random variables  satisfying the model defined in Equations \eqref{eq:model::X} and \eqref{eq:model::Y|X}.
Let 
\begin{align}
g : \mathbb{R}^{D \times L} &\rightarrow \mathbb{R}^{L \times D} \nonumber\\
 \Ab &\mapsto \Ab^\star =  \Sigmab^\star\Ab^T\Sigmab^{-1} = (\Gammab^{-1} + \Ab^T\Sigmab^{-1}\Ab)^{-1}\Ab^T\Sigmab^{-1}\label{g}
\end{align}
Then, the following holds for the estimator $\widehat{\Ab}^\star$ defined in Equation \eqref{AChapStar}.
\begin{align*}
 \sqrt{N}(\text{vec}(\widehat{\Ab}^\star) - \text{vec}(\Ab^\star))& \underset{{N\rightarrow +\infty} }{\rightarrow}\mathcal{N}_{DL}(\mathbf{0}, \Theta(\Ab))
\end{align*}
where $\Theta(\Ab) = Cov(\text{vec}(Dg(\Ab) . (\widehat \Ab - \Ab)))$ defined in Equation \eqref{covVecDiff}.

Moreover, $\Theta(\widehat{\Ab})$ is a consistent estimator of $\Theta(\Ab)$, then by Slutsky's Lemma we get the following:
\begin{align}
\sqrt{N}  (\text{vec}(\widehat \Ab^\star) - \text{vec}( \Ab^\star))^T \Theta(\widehat{\Ab})^{-1}  (\text{vec}(\widehat \Ab^\star) - \text{vec}( \Ab^\star)) \underset{N \rightarrow +\infty}{\rightarrow} \chi^2_{DL}.\label{asympNormalityAChapStarSlustky}
\end{align}
\end{theorem}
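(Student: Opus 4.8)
The plan is to apply the multivariate delta method (the matricial $\Delta$-method alluded to before the statement) to the one-to-one map $g$ of Equation~\eqref{g}, using as input the exact distribution of $\widehat{\Ab}$ from Proposition~\ref{distribAchap}. First I would establish the asymptotic normality of the inverse estimator itself: from $\widehat{\Ab} \sim \mathcal{MN}_{DL}(\Ab, \Sigmab, (\Yv^T\Yv)^{-1})$ we have $\mathrm{vec}(\widehat{\Ab}-\Ab) \sim \mathcal{N}_{DL}(\mathbf{0}, (\Yv^T\Yv)^{-1}\otimes \Sigmab)$. Since the columns of $\Yv$ are i.i.d.\ $\mathcal{N}_L(\mathbf{0},\Gammab)$, the law of large numbers gives $\frac{1}{N}\Yv^T\Yv \to \Gammab$ almost surely, hence $N(\Yv^T\Yv)^{-1}\to \Gammab^{-1}$, and therefore $\sqrt{N}\,\mathrm{vec}(\widehat{\Ab}-\Ab) \to \mathcal{N}_{DL}(\mathbf{0}, \Gammab^{-1}\otimes\Sigmab)$ in distribution (this is exact normality with a converging covariance, so the conclusion is immediate, no CLT needed). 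In particular $\widehat{\Ab}\to\Ab$ in probability.

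Next I would invoke the delta method: $g$ is differentiable at $\Ab$ (it is a rational map of the entries of $\Ab$, well-defined since $\Gammab^{-1}+\Ab^T\Sigmab^{-1}\Ab$ is positive definite hence invertible), so
\begin{align*}
\sqrt{N}\bigl(\mathrm{vec}(g(\widehat{\Ab})) - \mathrm{vec}(g(\Ab))\bigr) = \sqrt{N}\,\mathrm{vec}\bigl(Dg(\Ab).(\widehat{\Ab}-\Ab)\bigr) + o_P(1),
\end{align*}
and since $\mathrm{vec}(Dg(\Ab).\cdot)$ is a linear map, applying it to the Gaussian limit above yields $\sqrt{N}(\mathrm{vec}(\widehat{\Ab}^\star)-\mathrm{vec}(\Ab^\star)) \to \mathcal{N}_{DL}(\mathbf{0},\Theta(\Ab))$, where $\Theta(\Ab)$ is exactly the covariance of $\mathrm{vec}(Dg(\Ab).(\widehat{\Ab}-\Ab))$ computed in Lemma~\ref{computeDiff}, Equation~\eqref{covVecDiff} (that covariance is already written in terms of the limiting law $\Gammab^{-1}\otimes\Sigmab$ of the scaled input, so the scaling bookkeeping matches). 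Here I also use that $g(\widehat{\Ab})=\widehat{\Ab}^\star$ and $g(\Ab)=\Ab^\star$ by construction, and that the estimators $\widehat{\Ab}^\star$ of Equation~\eqref{AChapStar} coincide with $g$ applied to $\widehat{\Ab}$ (with $\Sigmab,\Gammab$ known and plugged in).

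For the second display, Equation~\eqref{asympNormalityAChapStarSlustky}, I would argue that $\Ab\mapsto\Theta(\Ab)$ is continuous on a neighbourhood of the true value and that $\Theta(\Ab)$ is invertible (being the covariance of a nondegenerate Gaussian — one should check nondegeneracy, or equivalently that $Dg(\Ab)$ is injective, which holds because $g$ is a diffeomorphism onto its image, $\Psi$ being an involution). By the continuous mapping theorem $\Theta(\widehat{\Ab}) \to \Theta(\Ab)$ in probability and hence $\Theta(\widehat{\Ab})^{-1}\to\Theta(\Ab)^{-1}$; combining with the asymptotic normality via Slutsky's lemma gives that the quadratic form converges to $\chi^2_{DL}$, since if $Z\sim\mathcal{N}_{DL}(\mathbf{0},\Theta)$ then $Z^T\Theta^{-1}Z\sim\chi^2_{DL}$.

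The main obstacle is not any single estimate but making sure the normalizations are consistent throughout: Proposition~\ref{distribAchap} gives a finite-sample law whose covariance $(\Yv^T\Yv)^{-1}\otimes\Sigmab$ decays like $1/N$, and one must verify that after multiplying by $\sqrt{N}$ the limiting covariance $\Gammab^{-1}\otimes\Sigmab$ is precisely the object that Lemma~\ref{computeDiff} used when it wrote $\Theta(\Ab)=Cov(\mathrm{vec}(Dg(\Ab).(\widehat{\Ab}-\Ab)))$; a secondary point requiring care is justifying the delta-method expansion when $\Yv^T\Yv$ is itself random — cleanest is to condition on $\Yv$, obtain the statement with $N(\Yv^T\Yv)^{-1}$ in place of $\Gammab^{-1}$, and then pass to the limit using $N(\Yv^T\Yv)^{-1}\to\Gammab^{-1}$ a.s.\ together with Slutsky. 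The rest is bookkeeping.
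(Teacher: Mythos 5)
Your proposal is correct and follows essentially the same route as the paper: the (matricial) delta method applied to $g$ using the exact matrix-normal law of $\widehat{\Ab}$ from Proposition~\ref{distribAchap}, with the remainder term vanishing after $\sqrt{N}$-scaling, followed by Slutsky's lemma and the standard Gaussian quadratic-form argument for the $\chi^2_{DL}$ limit. If anything, you are more careful than the paper on the two points it glosses over --- the bookkeeping between the finite-sample covariance $(\Yv^T\Yv)^{-1}\otimes\Sigmab$ and the $\sqrt{N}$-scaled limit entering $\Theta(\Ab)$, and the conditioning on the random design $\Yv$ --- so no gap to report.
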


\begin{proof}
The matrix version of the $\Delta$-method is  a second order Taylor expansion of $g: \Ab \mapsto \Ab^\star$.
Therefore, for $\Ab \in M_{D,L}(\mathbb{R})$ and $g$ defined by Equation~\eqref{g}, the Taylor expansion leads to 
\begin{align*}
\widehat{\Ab}^\star = g(\widehat \Ab) = g(\Ab) + Dg(\Ab) . (\widehat \Ab - \Ab) + R_N(\widehat \Ab)
\end{align*}
with $R_N(\widehat \Ab)$ is a rest term and $Dg(\Ab) . (\widehat \Ab - \Ab)$ is given in Lemma~\ref{computeDiff}.


Then, 
\begin{align}
\sqrt{N} (\widehat{\Ab}^\star - {\Ab}^\star) &=  \sqrt{N} Dg(\Ab).(\widehat{\Ab}-\Ab) + \sqrt{N} R_N(\widehat{\Ab}) \label{TaylorExpansion}
\end{align}
The last term in \eqref{TaylorExpansion} converges to 0 in probability, and by Proposition \ref{distribAchap}, the linear combination with respect to $\hat{\Ab}$
defined in \eqref{eq:diffg} is a multivariate Gaussian, centered.
Using \eqref{covVecDiff}, we get the distribution of the vectorized vector $vec(\widehat{\Ab}^\star)$.

Limiting distribution \eqref{asympNormalityAChapStarSlustky} is get by using Slustky's Lemma, as $\widehat{\Ab}$  converges in probability to $\Ab$.
\end{proof}

This results is the key theorem of this article as it allows to derive confidence regions for $\Ab^\star$ and prediction regions.
Wheres we consider the vectorize matrix $\widehat{\Ab}^\star$, formulae are explicit. Remark that the degree of freedom of the $\chi^2$ distribution depends on the size of the response and the covariates in the same way.

\section{Confidence regions and predictions regions}
\label{confidencePredictionRegion}
In this section, we provide confidence regions for $vec(\Ab^\star)$ and prediction regions for $\yb$ through the inverse regression method.

\subsection[Confidence regions for regression coefficients]{Confidence regions for $\Ab^\star$}
\begin{theorem}
 Suppose $((\Xb_1,\Yb_1),\ldots, (\Xb_N,\Yb_N))$ is a sequence of $iid$ random variables  satisfying the model defined in Equations \eqref{eq:model::X} and \eqref{eq:model::Y|X}.
 Then, a confidence region for $\Ab^\star$ is
 $$
 P\left( \text{vec}(\Ab^\star ) \in \tilde{\mathcal{R}}_{\text{vec}(\Ab^\star), \alpha} \right) \underset{n \rightarrow +\infty}{\rightarrow} 1-\alpha$$
where 
\begin{align*}
    \tilde{\mathcal{R}}_{\text{vec}(\Ab^\star), \alpha} = &\left\{ \ab^\star \in M_{L,D}(\mathbb{R}) \text{ s.t. } \right. \\ 
    & \left. (\text{vec}(\ab^\star-\widehat\Ab^\star))^T \Theta({\Ab})^{-1} (\text{vec}(\ab^\star-\widehat\Ab^\star) )
 \leq \chi^2_{DL}(1-\alpha) \right\}.
\end{align*}

with $\Theta(\Ab) = Cov(\text{vec}(Dg(\Ab) . (\widehat \Ab - \Ab)))$ defined in Equation \eqref{covVecDiff}.
\end{theorem}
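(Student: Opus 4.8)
The plan is to read off the statement directly from the asymptotic normality of $\widehat{\Ab}^\star$ established in the previous theorem, combined with the elementary fact that a quadratic form in a centered Gaussian vector, taken against the inverse of its own covariance, is $\chi^2$-distributed with degrees of freedom equal to the dimension.

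First I would recall that the previous theorem gives $\sqrt{N}(\text{vec}(\widehat{\Ab}^\star) - \text{vec}(\Ab^\star)) \rightarrow \mathcal{N}_{DL}(\mathbf{0}, \Theta(\Ab))$ in distribution, with $\Theta(\Ab) = Cov(\text{vec}(Dg(\Ab).(\widehat{\Ab}-\Ab)))$ given explicitly in \eqref{covVecDiff}. Assuming $\Theta(\Ab)$ is nonsingular, the continuous mapping theorem applied to the map $z \mapsto z^{T}\Theta(\Ab)^{-1}z$ yields that the quadratic form $N\,(\text{vec}(\widehat{\Ab}^\star) - \text{vec}(\Ab^\star))^{T}\Theta(\Ab)^{-1}(\text{vec}(\widehat{\Ab}^\star) - \text{vec}(\Ab^\star))$ converges in distribution to $\chi^{2}_{DL}$; this is exactly relation \eqref{asympNormalityAChapStarSlustky} (up to the use of $\Theta(\widehat{\Ab})$ there, which is licit by Slutsky's lemma since $\Theta(\widehat{\Ab})$ is consistent for $\Theta(\Ab)$). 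I would then simply rearrange the defining inequality of $\tilde{\mathcal{R}}_{\text{vec}(\Ab^\star),\alpha}$: because $\text{vec}$ is linear and the quadratic form is symmetric under sign change of its argument, the event $\{\text{vec}(\Ab^\star)\in \tilde{\mathcal{R}}_{\text{vec}(\Ab^\star),\alpha}\}$ coincides with $\{(\text{vec}(\widehat{\Ab}^\star)-\text{vec}(\Ab^\star))^{T}\Theta(\Ab)^{-1}(\text{vec}(\widehat{\Ab}^\star)-\text{vec}(\Ab^\star)) \le \chi^{2}_{DL}(1-\alpha)\}$. Taking probabilities, invoking the above convergence in distribution, and using that the c.d.f.\ of $\chi^{2}_{DL}$ is continuous at its $(1-\alpha)$-quantile, gives $P(\text{vec}(\Ab^\star)\in\tilde{\mathcal{R}}_{\text{vec}(\Ab^\star),\alpha}) \rightarrow 1-\alpha$, which is the claim.

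The step I expect to require the most care is the nonsingularity of the asymptotic covariance $\Theta(\Ab)$: the $\chi^{2}_{DL}$ limit (and hence the inversion of $\Theta(\Ab)$ in the definition of the region) needs $\Theta(\Ab)$ to have full rank $DL$, so one should either add this as a hypothesis or verify it from the explicit formula \eqref{covVecDiff} using that $\Sigmab$ and $\Gammab$ are positive definite. Everything else — linearity of $\text{vec}$, symmetry of the quadratic form, matching the scaling between the limiting statement and the definition of $\tilde{\mathcal{R}}_{\text{vec}(\Ab^\star),\alpha}$, and the portmanteau argument passing from the distributional limit to convergence of the coverage probability — is routine bookkeeping.
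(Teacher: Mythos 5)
Your proposal is correct and follows essentially the same route as the paper, which leaves this proof implicit: it is the standard combination of the asymptotic normality theorem for $\widehat{\Ab}^\star$, Slutsky's lemma, and the Gaussian-to-$\chi^2$ quadratic-form argument (spelled out for $L=1$ in the paper's appendix as Lemma on $\chi^2$ confidence regions). Your two caveats — the nonsingularity of $\Theta(\Ab)$ and the $N$-scaling of the quadratic form needed to match the limit in \eqref{asympNormalityAChapStarSlustky} with the region as written — are well taken, as the paper glosses over both.
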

Note that this confidence region is a quadratic form as matrix parameters are considered. Then, he $\chi^2$ distribtion is involved.
Those explicit formulae allows to compute confidence regions in practice. Numeric performances stand in Section \ref{simulations}.



\subsection{Prediction regions}
\begin{theorem}
 Suppose $((\Xb_1,\Yb_1),\ldots, (\Xb_N,\Yb_N))$ is a sequence of $iid$ random variables  satisfying the model defined in Equations \eqref{eq:model::X} and \eqref{eq:model::Y|X}.
 Then,
$$
 P\left( \yb_{n+1} \in \widetilde{\mathcal{PR}}_{\yb, \alpha} \right) \underset{n \rightarrow +\infty}{\rightarrow} 1-\alpha$$
where 
\begin{align}
\widetilde{\mathcal{PR}}_{\yb, \alpha}& = \left\{ y\in \mathbb R^L \text{ s.t. }  \right.\label{eq:ICAstar} \\
& \left. (y-\widehat\Ab^\star \xb_{N+1})^T (\Omega({\Ab}^\star \xb_{N+1}) + \Sigmab^\star)^{-1} (y-\widehat\Ab^\star\xb_{N+1}) 
 \leq \chi^2_{L}(1-\alpha) \right\} \nonumber
\end{align}
 where 
 $\Omega(\widehat{\Ab}^\star \xb_{N+1})$ 
 is the following $(L \times L)$ covariance matrix
 $$
\Omega({\Ab}^\star \xb_{N+1}) =(\mathbb I_L \otimes \xb_{N+1} ^T  ) \Theta({\Ab}) (\xb_{N+1} ^T \otimes \mathbb I_L).
$$

where $\Theta(\Ab) = Cov(\text{vec}(Dg(\Ab) . (\widehat \Ab - \Ab)))$ defined in Equation \eqref{covVecDiff}.
\end{theorem}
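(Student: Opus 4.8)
The plan is to identify the asymptotic law of the \emph{prediction error} $\yb_{N+1}-\widehat{\Ab}^\star\xb_{N+1}$. Since the new response obeys Model~\eqref{eq:model::Y|X}, write $\yb_{N+1}=\Ab^\star\xb_{N+1}+\boldsymbol{\varepsilon}_{N+1}$ with $\boldsymbol{\varepsilon}_{N+1}\sim\mathcal{N}_L(\mathbf 0,\Sigmab^\star)$, and split
\begin{align*}
\yb_{N+1}-\widehat{\Ab}^\star\xb_{N+1}=(\Ab^\star-\widehat{\Ab}^\star)\xb_{N+1}+\boldsymbol{\varepsilon}_{N+1}.
\end{align*}
First I would record that, $\xb_{N+1}$ being fixed and $\widehat{\Ab}^\star$ a measurable function of the training sample $((\Xb_1,\Yb_1),\dots,(\Xb_N,\Yb_N))$, the first term depends only on the training data, while $\boldsymbol{\varepsilon}_{N+1}$ is attached to the new observation; hence the two summands are independent, the second being exactly $\mathcal{N}_L(\mathbf 0,\Sigmab^\star)$.

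Next I would transport the asymptotic normality of $\widehat{\Ab}^\star$ through the linear map $\Ab^\star\mapsto\Ab^\star\xb_{N+1}$. By the asymptotic normality theorem, $\sqrt N(\text{vec}(\widehat{\Ab}^\star)-\text{vec}(\Ab^\star))\to\mathcal{N}_{DL}(\mathbf 0,\Theta(\Ab))$ with $\Theta(\Ab)$ as in \eqref{covVecDiff}. Applying the vectorization identity $\text{vec}(AXB)=(B^\top\otimes A)\text{vec}(X)$ with $A=\mathbb I_L$, $X=\widehat{\Ab}^\star-\Ab^\star$, $B=\xb_{N+1}$ gives $(\widehat{\Ab}^\star-\Ab^\star)\xb_{N+1}=(\xb_{N+1}^\top\otimes\mathbb I_L)\,\text{vec}(\widehat{\Ab}^\star-\Ab^\star)$, so that
\begin{align*}
\sqrt N\,(\widehat{\Ab}^\star-\Ab^\star)\xb_{N+1}\ \underset{N\rightarrow+\infty}{\longrightarrow}\ \mathcal{N}_L\bigl(\mathbf 0,\ \Omega(\Ab^\star\xb_{N+1})\bigr),
\end{align*}
where $\Omega(\Ab^\star\xb_{N+1})$ is the quadratic form in $\Theta(\Ab)$ of the statement (up to the commutation-matrix bookkeeping relating $(\mathbb I_L\otimes\xb_{N+1}^\top)$ and $(\xb_{N+1}^\top\otimes\mathbb I_L)$). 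Combining the two independent, (asymptotically) centred Gaussian pieces, the prediction error is asymptotically centred Gaussian with covariance $\Omega(\Ab^\star\xb_{N+1})+\Sigmab^\star$; pre- and post-multiplying by the inverse of this covariance yields a $\chi^2_L$ variable. Since $\widehat{\Ab}^\star\to\Ab^\star$ and $\widehat{\Ab}\to\Ab$ in probability while $\Sigmab^\star$ is known, Slutsky's lemma allows one to evaluate $\Omega$ and $\Theta$ at the estimated value without changing the limit, giving $P(\yb_{N+1}\in\widetilde{\mathcal{PR}}_{\yb,\alpha})\to 1-\alpha$.

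The delicate point is the bookkeeping on scales. The estimation contribution $(\widehat{\Ab}^\star-\Ab^\star)\xb_{N+1}$ is $O_P(N^{-1/2})$ and is therefore asymptotically dominated by the irreducible noise $\boldsymbol{\varepsilon}_{N+1}=O_P(1)$: in the strict limit the prediction error is $\mathcal{N}_L(\mathbf 0,\Sigmab^\star)$, and $\Omega(\Ab^\star\xb_{N+1})$ plays the role of a finite-sample correction rather than of a genuine limiting term. The argument must therefore make explicit that keeping this term widens the region, so that the asymptotic coverage remains at least $1-\alpha$ (with equality in the limit), or equivalently that $\widetilde{\mathcal{PR}}_{\yb,\alpha}$ contains the region built from $\Sigmab^\star$ alone. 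A secondary, routine task is controlling the delta-method remainder $R_N(\widehat{\Ab})$ — already shown to be negligible in the proof of the asymptotic normality theorem — and verifying the Kronecker algebra that identifies the transformed covariance with $\Omega(\Ab^\star\xb_{N+1})=(\mathbb I_L\otimes\xb_{N+1}^\top)\,\Theta(\Ab)\,(\xb_{N+1}^\top\otimes\mathbb I_L)$; both follow from results already established.
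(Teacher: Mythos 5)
Your decomposition $\yb_{N+1}-\widehat{\Ab}^\star\xb_{N+1}=(\Ab^\star-\widehat{\Ab}^\star)\xb_{N+1}+\boldsymbol{\varepsilon}_{N+1}$, the independence of the two pieces, and the transport of the asymptotic normality of $\text{vec}(\widehat{\Ab}^\star)$ through $\text{vec}(AXB)=(B^\top\otimes A)\text{vec}(X)$ is exactly the route the paper intends (it gives no explicit proof of this theorem; everything is meant to follow from the asymptotic normality of $\widehat{\Ab}^\star$ and Slutsky). Your remark that the stated sandwiching matrices $(\mathbb I_L\otimes\xb_{N+1}^\top)$ and $(\xb_{N+1}^\top\otimes\mathbb I_L)$ do not quite conform — the clean identity gives $(\xb_{N+1}^\top\otimes\mathbb I_L)\,\Theta(\Ab)\,(\xb_{N+1}\otimes\mathbb I_L)$ — is also well taken.

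The one point you flag but leave unresolved is the genuine crux, and you should resolve it rather than offer the two alternatives ``coverage at least $1-\alpha$'' and ``equality in the limit.'' As you observe, $(\widehat{\Ab}^\star-\Ab^\star)\xb_{N+1}=O_P(N^{-1/2})$, so the prediction error converges in distribution to $\mathcal{N}_L(\mathbf 0,\Sigmab^\star)$ exactly. Consequently the stated limit $P(\yb_{N+1}\in\widetilde{\mathcal{PR}}_{\yb,\alpha})\to 1-\alpha$ holds \emph{if and only if} $\Omega(\Ab^\star\xb_{N+1})\to 0$, i.e.\ if $\Theta(\Ab)=Cov(\text{vec}(Dg(\Ab)\cdot(\widehat{\Ab}-\Ab)))$ is read literally as the finite-sample covariance, which is of order $1/N$ through the among-column variance $(\Yv^\top\Yv)^{-1}$ of $\widehat{\Ab}$. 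If instead $\Theta(\Ab)$ is the fixed limiting covariance of the $\sqrt N$-rescaled estimator (the reading suggested by the asymptotic normality theorem), then $\Omega(\Ab^\star\xb_{N+1})$ is a nonvanishing positive semidefinite matrix, the region strictly contains the one built from $\Sigmab^\star$ alone, and the coverage converges to a limit strictly larger than $1-\alpha$ — the theorem as stated would then be false as an equality. Your proof should commit to the first reading (and note that then Slutsky handles the plug-in of $\widehat{\Ab}$ into $\Omega$ as you describe); with that single clarification the argument is complete.
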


One can notice that the covariance matrix that is inverted in Equation~\eqref{eq:ICAstar} breaks down into 2 parts. The first one, $\Omega({\Ab}^\star \xb_{N+1})$, represents the variance of the prediction which depends on the estimation accuracy of $\Ab^\star$ while the second part, $\Sigmab^\star$, is the variance inherited from the residuals.

Moreover, as previously, every formula is explicit so numerical experiments are derived in Section \ref{simulations}.

\section{Simulations}
\label{simulations}

The goal of this section is to compute the prediction regions derived from the theoretical results presented in Section \ref{confidencePredictionRegion}. For several designs regarding the sample size, the dimension, the sparsity and several covariance patterns, we study the coverage, the volume of the interval and the computation time.  For comparison, we also compute prediction intervals deduced from the least square estimator and a regularized approach. A \texttt{R} code is available on authors' webpages to apply the 3 compared methods on simulated data, on the following webpage \href{https://research.pasteur.fr/fr/member/emeline-perthame/}{https://research.pasteur.fr/fr/member/emeline-perthame/}. 


\subsection{Simulation design}
\label{designSimu}

In order to assess the impact of data dimension and design complexity on different estimation methods of prediction regions, we perform a simulation study. We consider a response with dimension $L$ varying in $\{1,2,5\}$. Indeed, when $L=1 \text{ or } 2$, prediction regions are easily graphically displayable which is useful to visualize methods. We focus on three distinct designs namely a high-dimensional one $(N=50, D=100)$, an asymptotic one $(N=500, D=100)$ and an intermediate design $(N=100, D=100)$ which allows to investigate situations with $N \leq D$ and $N > D$. Data are simulated according to an inverse regression model and forward parameters are deduced from Equation \eqref{eq:bijection}. For each combination of dimension, we focus on the 3 following scenarii: 
\begin{itemize}
\item[(Case 1)] Sparse regression coefficients and independent responses: $\Ab$ is a $D \times L $ matrix with $90\%$ of zero entries randomly drawn. The $10\%$ nonzero remaining coefficients are uniformly drawn into a uniform distribution on $(-2,2)$. Matrix $\Gammab$ of covariances between response terms is set to $\mathbb I_L$. The residual covariance matrix of inverse regression $\Sigmab$ is set to $\mathbb I_D$. Note that a diagonal $\Sigmab$ and a sparse $\Ab$ under the inverse model lead to a sparse matrix of regression coefficients for forward regression $\Ab^\star$. 
\item[(Case 2)] Sparse regression coefficients and correlated responses: same as previous scenario except that $\Gammab$ is a full covariance matrix generated according to a factor model such as dependence among response terms is rather strong. 
\item[(Case 3)] Full matrix of regression coefficients and correlated responses: coefficient matrix $\Ab$ is full with entries uniformly sampled in $[-0.5,0.5]$ and covariance matrix $\Gammab$ is generated as in Case 2. The residual covariance matrix $\Sigmab$ is set to $\mathbb I_D$
\end{itemize}
Note that the amplitude of coefficients in $\Ab$ differs from one case to another. This amplitude is adjusted in order to make scenarii comparable regarding to the signal to noise ratio (SNR) criterion defined as: 
$$
\text{SNR} = \frac{1}{L} \text{trace}(\Ab^\star \Gammab^\star (\Ab^\star){^T}(\Sigmab^\star)^{-1})
$$
where trace refers to the sum of diagonal entries of a matrix. In this simulation setting, for all cases and all values of $L$, the SNR varies between 5 and 10 which is rather (reasonably) high. Note that we extended the well-known SNR definition of \cite{VG17} to our multivariate response framework.

Datasets are generated under a linear regression model as defined in Equations \eqref{eq:model::X}-\eqref{eq:model::Y|X}. For each simulated design, $1~000$ learning datasets with dimension $(N,D)$ are generated as well as $1~000$ corresponding testing observations. 
Note that the computation of prediction regions for inverse model involves the computation of a commutation matrix. To compute such matrices, we used the fast routine implemented in the function \texttt{commutation.matrix} available in the \texttt{R} package \texttt{matrixcalc}.

We compare the prediction regions derived from the 3 following methods: the proposed method based on inverse regression refered as IR in the following, the so-called least square estimator (LSE) for designs with $N > D$ and a lasso prediction interval based on bootstrap for designs with $N < D$. The accuracy of the method is assessed by computing the coverage (proportion of testing observations falling into the prediction region), the volume of the prediction regions and the computation time required to compute the prediction region on a MacBook Pro - 2,9 GHz Intel Core i5 processor - RAM 16 Go with programs written in \texttt{R}. In this simulation study, the level of confidence for prediction regions is set to 95\%. 

\bigskip



\subsection{Results of the intensive simulation study}

The results of this simulation study are presented in Table~\ref{tab:addlabel}. This table presents the results for varying sample sizes and designs in column, and coverage, volume and time computation in row for varying methods and response dimension. 
For each scenario, IR is compared to LSE when $N > D$ and to Lasso when $N \leq D$.

First, Table~\ref{tab:addlabel} demonstrates that IR performs as well as a variable selection method. Indeed, its performances are similar or even better than Lasso for multivariate response: IR achieves larger coverage and smaller volume.  Note that multivariate version of the Lasso is not implemented to our knowledge in \texttt{R} which makes IR  a challenging method. Interestingly, IR, which does not suppose sparsity in the model, seems to be efficient on sparse design (Cases 1 and 2)  regarding to both coverage and volume. Table~\ref{tab:addlabel} also illustrates that our results are asymptotic, meaning that performances of IR are good regarding volume and coverage for $N > D$. When $N < D$, the confidence level increases with $N$ and is reached when $N > D$. Note that the confidence level is almost reached for $N=D$ which suggests that the asymptotic normality may be quickly reached. Compared to bootstrapped Lasso, IR approach is significantly faster as our method does not rely on resampling. At last, this table shows that IR works well in high-dimension as large $D$ and $N$ are computationally feasible. Computation time is reasonable while achieving challenging coverage and volume when both $D$ and $N$ are large.

Whatever the design, note that the volume of prediction regions increases with $L$, meaning the underlying space dimension. It is interesting to notice that, by normalising the volume by the dimension, the volume stays constant across the situations studied. 



\begin{table}
\begin{center}
\scalebox{0.8}{
    \begin{tabular}{cllccccccccc}
\hline
          &       &       &       & N = 50 &       &       & N = 100 &       &       & N = 500 &  \\
          &       &       & Case 1 & Case 2 & Case 3 & Case 1 & Case 2 & Case 3 & Case 1 & Case 2 & Case 3 \\
\hline
          & IR & \multicolumn{1}{l}{Coverage} & 0.88 & 0.87 & 0.84 & 0.92 & 0.93 & 0.93 & 0.95 & 0.94 & 0.95 \\
          & Lasso/LSE &       & 0.86  & 0.88  & 0.86  & 0.90  & 0.92  & 0.88  & 0.94  & 0.95  & 0.95 \\
    \multicolumn{1}{l}{L = 1} & IR & \multicolumn{1}{l}{Volume} & 1.26 & 1.26 & 1.25 & 1.28 & 1.28 & 1.27 & 1.29 & 1.29 & 1.29 \\
          & Lasso/LSE &       & 1.31  & 1.28  & 1.53  & 1.26  & 1.25  & 1.35  & 1.45  & 1.45  & 1.44 \\
          & IR & \multicolumn{1}{l}{CPU} & 0.02 & 0.02 & 0.02 & 0.02 & 0.02 & 0.02 & 0.02 & 0.02 & 0.02 \\
          & Lasso/LSE &       & 1.01  & 0.97  & 1.09  & 1.13  & 1.13  & 1.35  & 0.01  & 0.01  & 0.01 \\
          & IR & \multicolumn{1}{l}{Coverage} & 0.86 & 0.84 & 0.86 & 0.91 & 0.91 & 0.90 & 0.94 & 0.94 & 0.95 \\
          & Lasso/LSE &       & 0.86  & 0.86  & 0.77  & 0.89  & 0.90  & 0.81  & 0.95  & 0.95  & 0.95 \\
    \multicolumn{1}{l}{L = 2} & IR & \multicolumn{1}{l}{Volume} & 1.90 & 1.90 & 1.92 & 1.96 & 1.94 & 1.99 & 2.00 & 1.96 & 2.03 \\
          & Lasso/LSE &       & 2.07  & 2.05  & 3.00  & 1.95  & 1.93  & 2.31  & 2.51  & 2.50  & 2.55 \\
          & IR & \multicolumn{1}{l}{CPU} & 0.09 & 0.09 & 0.10 & 0.09 & 0.09 & 0.09 & 0.09 & 0.15 & 0.09 \\
          & Lasso/LSE &       & 2.06  & 2.11  & 2.27  & 2.53  & 2.61  & 2.82  & 0.01  & 0.01  & 0.01 \\
          & IR & \multicolumn{1}{l}{Coverage} & 0.84  & 0.81  & 0.84  & 0.92  & 0.91  & 0.90  & 0.94  & 0.94  & 0.95 \\
          & Lasso/LSE &       & 0.75 & 0.77 & 0.74 & 0.87 & 0.89 & 0.88 & 0.94 & 0.94 & 0.95 \\
    \multicolumn{1}{l}{L = 5} & IR & \multicolumn{1}{l}{Volume} & 6.78 & 6.34 & 7.52 & 7.24 & 6.74 & 8.13 & 7.27 & 6.84 & 8.26 \\
          & Lasso/LSE &       & 9.22  & 8.91  & 24.89 & 7.36  & 6.89  & 11.93 & 12.60 & 12.09 & 14.59 \\
          & IR & \multicolumn{1}{l}{CPU} & 1.76 & 1.16 & 1.37 & 1.69 & 1.41 & 1.25 & 1.27 & 1.24 & 1.29 \\
          & Lasso/LSE &       & 5.16  & 4.73  & 5.35  & 5.48  & 5.50  & 6.62  & 0.01  & 0.01  & 0.01 \\
          \hline
    \end{tabular}%
    }
\caption{Results of simulations study: 
prediction regions computed on datasets simulated under models described in Section \ref{designSimu}.
Coverage, volume and CPU time are computed for each method to compare performances.
For large sample size, we compare IR with the LSE and for small size we compare IR with the bootstrapped Lasso.
Each method is assessed 1000 times, and mean is computed.}
  \label{tab:addlabel}%
  \end{center}
\end{table}%

Figure~\ref{fig:ellipse} displays a graphical representation of prediction regions for Case 1 which are ellipses when $L=2$. We consider two sample sizes, $N=50$ and $N=500$. Dotted line represents ellipses computed by LSE when $N=500$ and Lasso when $N=50$, long dashed line represents ellipses computed by IR and solid line represents true prediction regions computed with true parameters used for simulation. Grey dots are 500 replications of responses from the same covariate's profile representing the residual variance. Three specific profiles of covariates are considered: on the left panel, prediction ellipse for the median covariate's profile is computed which is an easy situation. When $N=500$, both LSE and IR provide similar ellipses, close to the true one. When $N=50$, IR's ellipse is close to the true one while lasso correctly predicts the response but the volume of the ellipse is larger. For the middle panel, a covariate's profile corresponding to quantile 0.35 is generated making the computation of the prediction ellipse more complex. When sample size is large, LSE and IR are competitive regarding to true ellipse and equivalent. When $N=50$, the ellipse computed with IR is larger than the theoretical one. The bootstrapped Lasso fails in prediction, which confirms the lower coverages observed in Table~\ref{tab:addlabel}. At last, for the right panel, an even more extreme profile associated to quantile 0.2 is generated, making the computation less reliable. When $N=500$, the volume of ellipses computed by LSE and IR gets even larger as the covariate's profile gets far from the mean. Notice that LSE and IR again achieve similar ellipses in this setting. When $N=50$, conclusions of the middle panel apply as well.

\begin{figure}
\includegraphics[width=\textwidth]{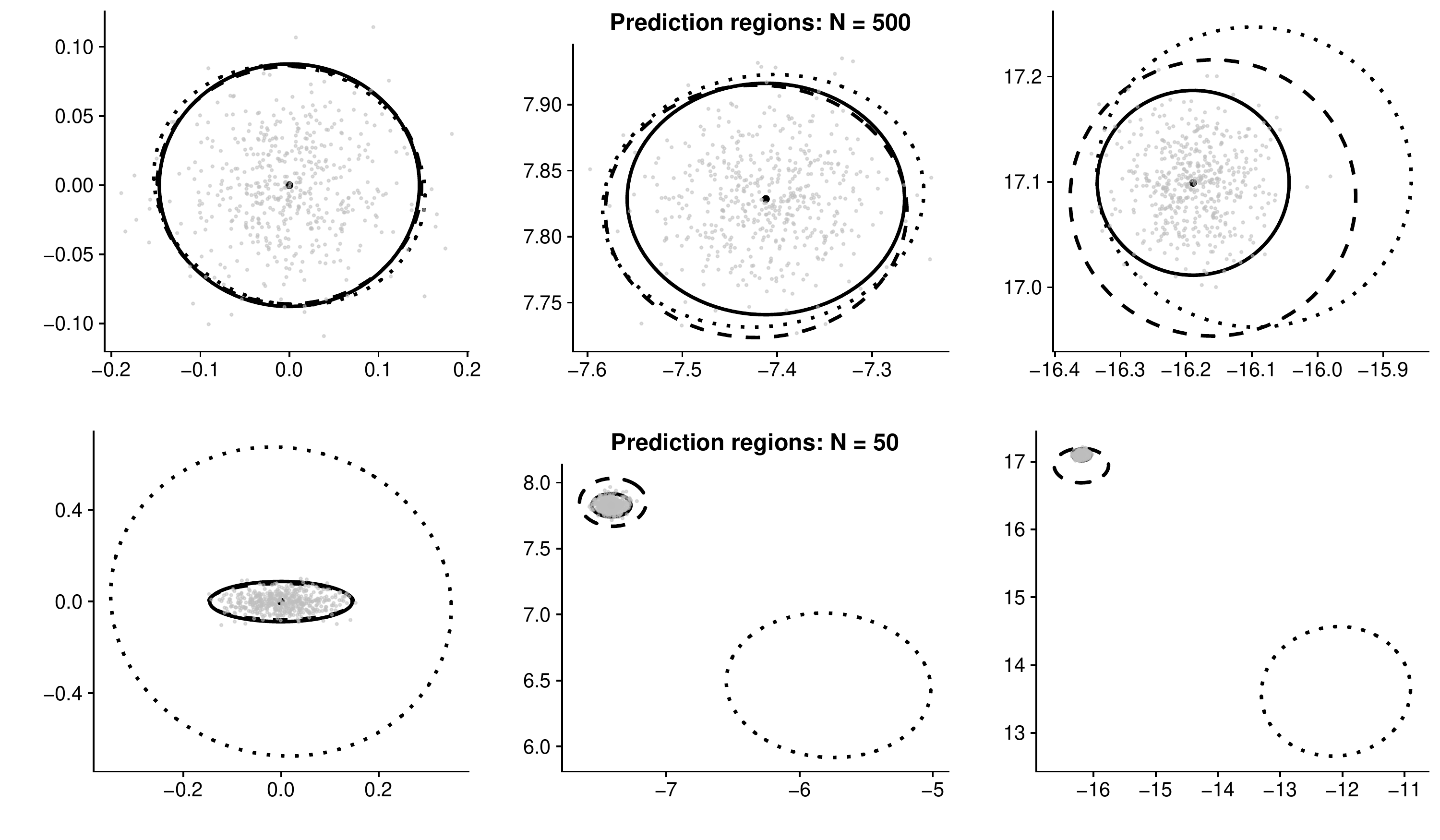}
\caption{Prediction regions for $L=2$. Dotted line: LSE for $N=500$ and Bootstrapped Lasso for $N=50$, long dashed line: IR, solid line: true parameters, grey dots: 500 responses generated from the same covariate's profile.   \label{fig:ellipse}}
\end{figure}

\subsection{Study of estimation accuracy}

In this section, we focus on the first setting (Case 1) with $L=2$ and $D=5$ and $N=100$ in order to visualise the ability of inverse regression to estimate parameters $(\Ab^\star, \Gammab^\star,\Sigmab^\star)$ and to predict response. Violin plots of Figures~\ref{fig:Gammastar} to \ref{fig:Astar} display the distribution of the estimators in black and the true value of the parameter in red. Regarding the estimation of the $D \times D$ matrix $\Gammab^\star$, Figure~\ref{fig:Gammastar} demonstrates that IR is able to retrieve the diagonal structure of the true matrix. Note that the estimation is more variable for diagonal terms. Same remarks hold for the estimation of the $L \times L$ matrix $\Sigmab^\star$, see Figure~\ref{fig:Sigmastar}. Regarding estimation of $\Ab^\star$, it is interesting to notice that IR partially retrieves the sparse structure of the true parameter. Indeed, all values in $\Ab^\star$ are zero except the 4th coefficient of the first row, and the 3rd value of the second row in Figure~\ref{fig:Astar}. The corresponding violin plots are centred around the true value. 

\begin{figure}
    \begin{center}
    \includegraphics[width=0.8\textwidth,trim={0 0 0 1.1cm},clip]{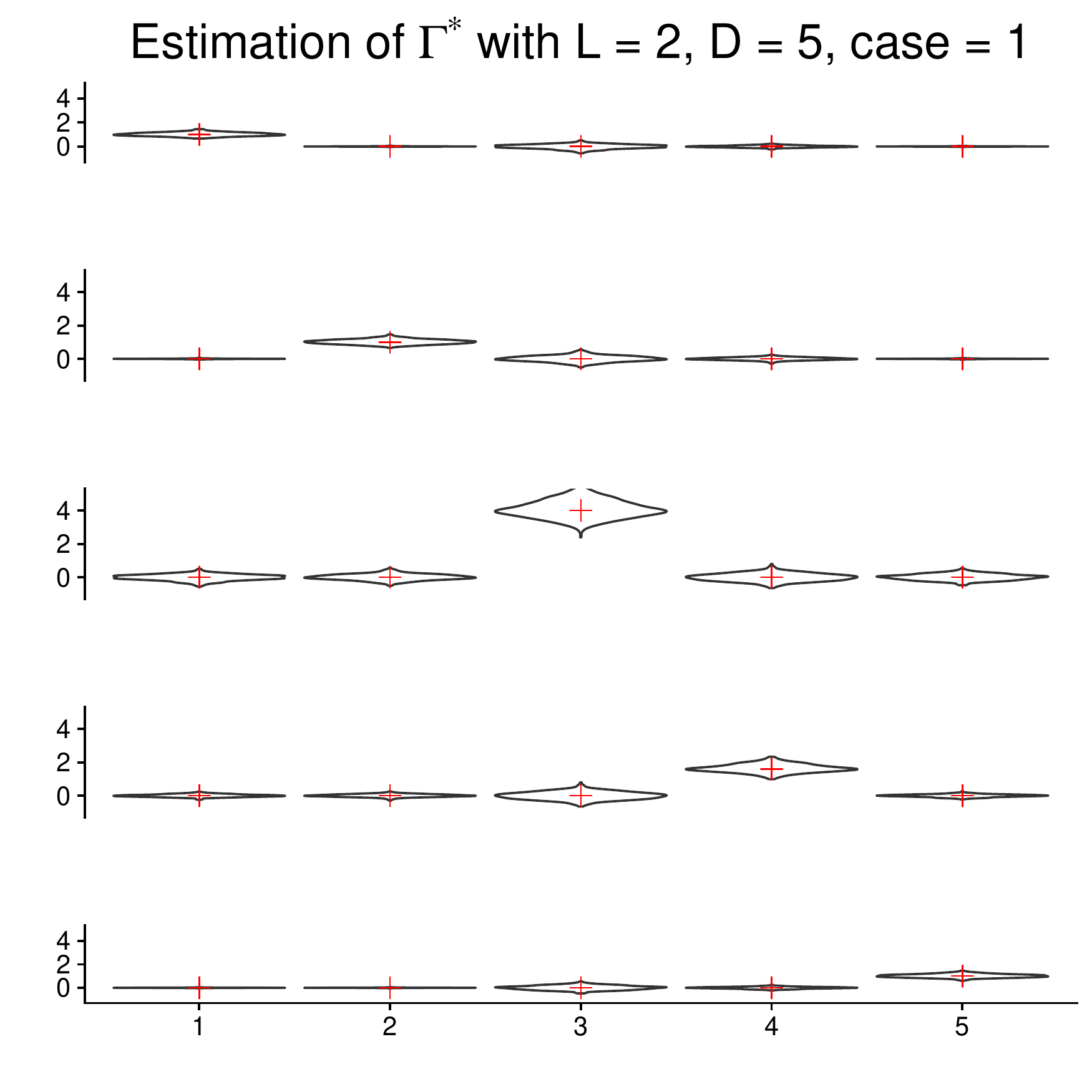}
    \caption{Violin plots displaying the distribution of $\Gammab^\star$ estimator for $L=2, D=5$ and Case 1. $\Gammab^\star$ is diagonal, true values are located by red crosses. \label{fig:Gammastar}}
    \end{center}    
\end{figure}

\begin{figure}
    \begin{center}
    \includegraphics[width=0.5\textwidth,trim={0 0 0 1.1cm},clip]{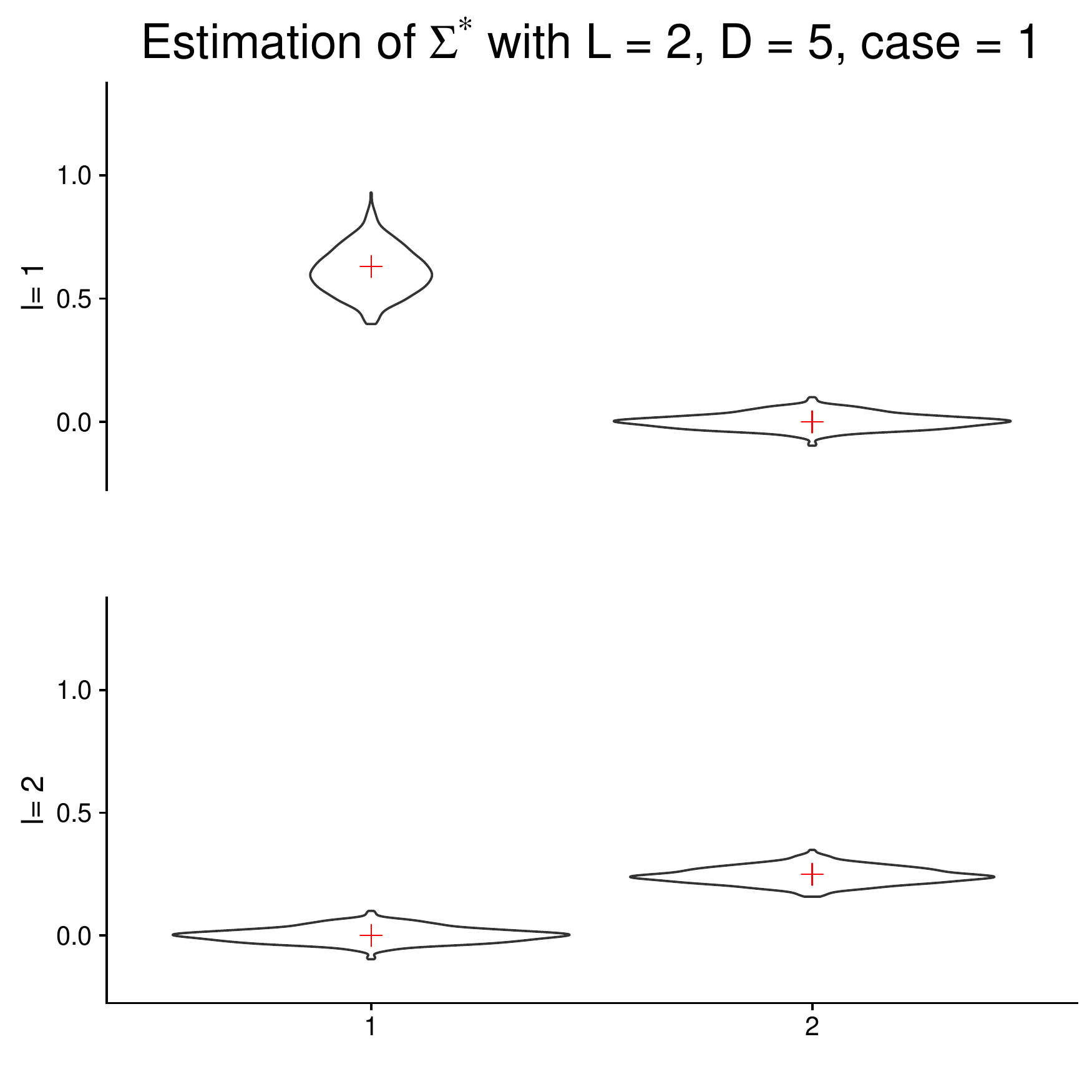}
    \caption{Violin plots displaying the distribution of $\Sigmab^\star$ estimator for $L=2, D=5$ and Case 1. $\Sigmab^\star$ is diagonal, true values are located by red crosses.\label{fig:Sigmastar}}
    \end{center}
\end{figure}

\begin{figure}
    \begin{center}
    \includegraphics[width=\textwidth,trim={0 0 0 1.1cm},clip]{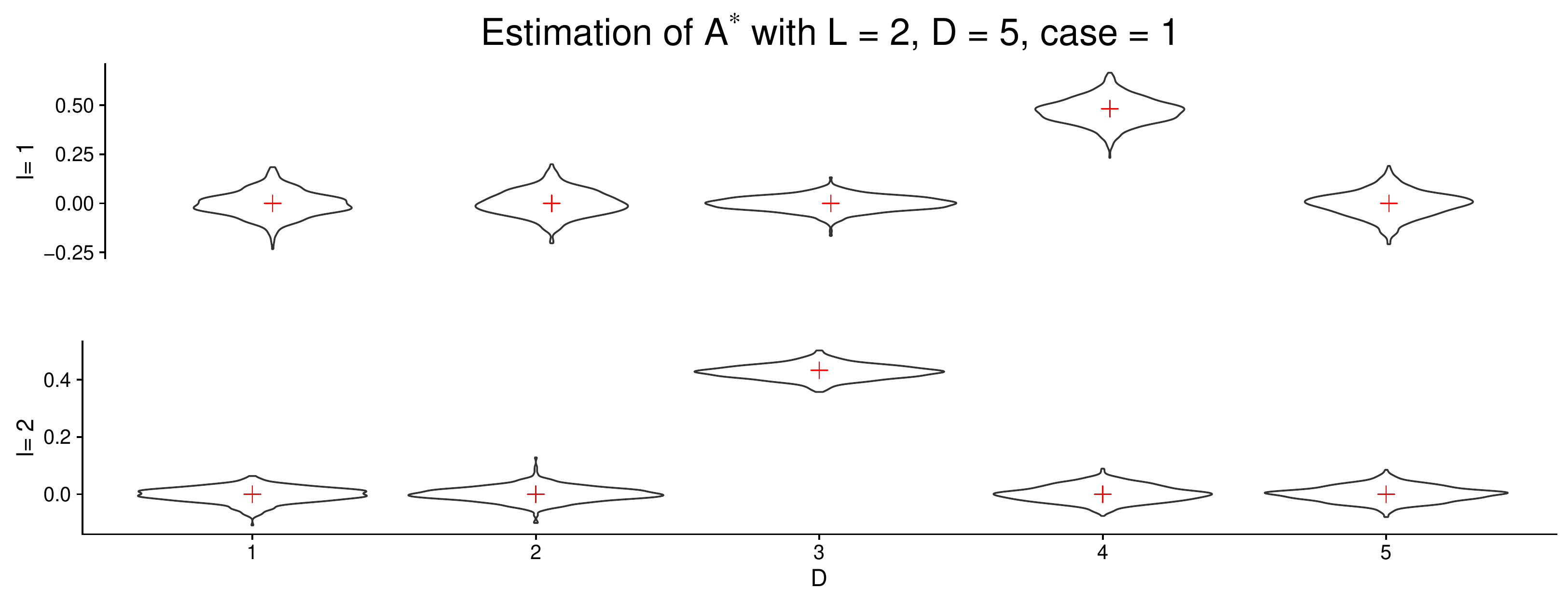}
    \caption{Violin plots displaying the distribution of $\Ab^\star$ estimator for $L=2, D=5$ and Case 1. $\Ab^\star$ is sparse, with 2 non zero entries, true values are located by red crosses.\label{fig:Astar}}
    \end{center}
\end{figure}

Figure~\ref{fig:ychap} displays the distribution of absolute prediction error $\vert \widehat \Yv - \Yv\vert$. Note that IR achieves interesting prediction accuracy most of prediction errors are close to 0. Prediction error of the second response seems easier to predict than the first component which is not surprising as the residual variance in matrix $\Sigmab^\star$ for the 2nd response is smaller than residual variance of first response component. 

\begin{figure}
    \begin{center}
    \includegraphics[width=\textwidth,trim={0 0 0 1.3cm},clip]{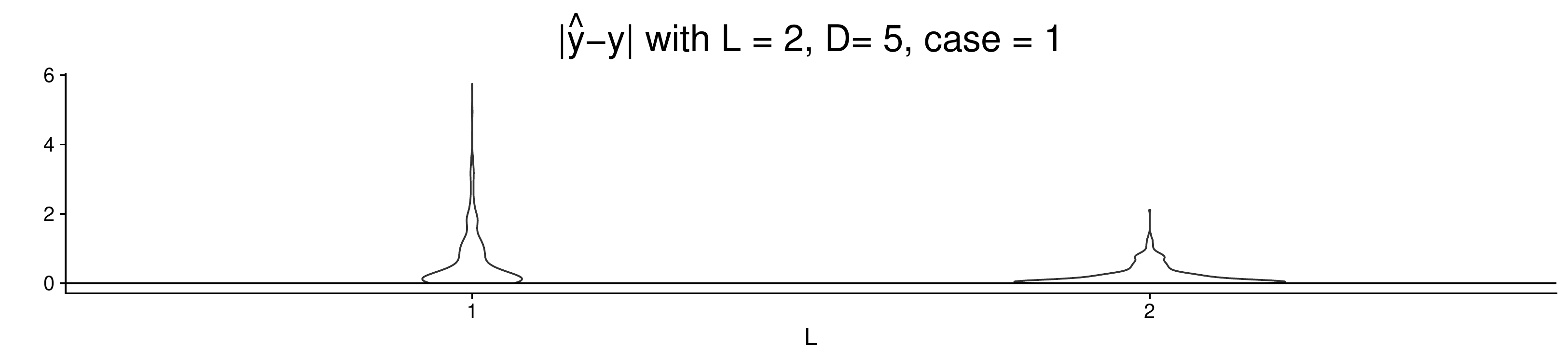}
    \caption{Violin plots displaying the distribution of the absolute prediction error $\vert \widehat \Yv - \Yv \vert $ for $L=2, D=5$ and Case 1. \label{fig:ychap}}
    \end{center}
\end{figure}

\section{Conclusion and further discussion}
\label{conclusion}

In this article, the properties of inverse regression are extensively investigated. Inverse regression addresses linear regression issues with random multivariate predictors and multiple responses. The characteristic of this model is that it inverts the role of covariates and response. By making weak assumptions on the residual covariance matrix of the inverse regression, this model allows to consider settings with both large sample size and covariates dimension, as an alternative to least square methods or regularized methods. Explicit estimators of model parameters are derived, for which exact or asymptotic distributions and confidence regions are deduced. Last but not least, asymptotic prediction regions are derived, allowing to quantify the confidence in estimation.

In an intensive simulation study, we present inverse regression as an alternative to variable selection when the sample size is small regarding to the dimension of covariates. Indeed, inverse regression achieves interesting coverage for reasonable time computation. Although our results are asymptotic, performances are challenging for finite sample and illustrates how this model can be used in practice. 

A future work could be the extension of this model to generalized linear model by considering other distributions of the noise of the inverse model. 

\bibliographystyle{plain}
\bibliography{biblio}

\begin{thebibliography}{10}

\bibitem{CandesTao}
E.~Candes and T.~Tao.
\newblock The dantzig selector: Statistical estimation when p is much larger
  than n.
\newblock {\em Ann. Statist.}, 35(6):2313--2351, 2007.

\bibitem{Chao}
S.K. Chao, Y.~Ning, and H.~Liu.
\newblock On high dimensional post-regularization prediction intervals.
\newblock Technical report, arXiv, 2015.

\bibitem{C07}
D.~Cook.
\newblock Fisher lecture: Dimension reduction in regression.
\newblock {\em Statistical Science}, 22(1):1--26, 2007.

\bibitem{crowther75}
N.A.S. Crowther.
\newblock The exact non-central distribution of a quadratic form in normal
  vectors.
\newblock {\em South African Statistical Journal}, 9:27--36, 1975.

\bibitem{DFH14}
A.~Deleforge, F.~Forbes, and R.~Horaud.
\newblock High-dimensional regression with gaussian mixtures and
  partially-latent response variables.
\newblock {\em Statistics and Computing}, 25(5):893--911, 2015.

\bibitem{sclerose}
M.~El~Behi, C.~Sanson, C.~Bachelin, L.~Guillot-No{\"{e}}l, J.~Fransson,
  B.~Stankoff, E.~Maillart, N.~Sarrazin, V.~Guillemot, H.~Abdi,
  I.~Cournu-Rebeix, B.~Fontaine, and V.~Zujovic.
\newblock Adaptive human immunity drives remyelination in a mouse model of
  demyelination.
\newblock {\em Brain}, 4(170):967--980, 2017.

\bibitem{GO96}
E.~I. George and S.D. Oman.
\newblock Multiple-shrinkage principal component regression.
\newblock {\em The Statistician}, 45:111--124, 1996.

\bibitem{GNbook}
A.K. Gupta and D.K. Nagar.
\newblock {\em Matrix variate distributions}.
\newblock Chapman \& HALL/CRC, 2000.

\bibitem{H92}
I.S. Helland.
\newblock Maximum likelihood regression on relevant components.
\newblock {\em Journal of the Royal Statistical Society, Series B},
  54:637--347, 1992.

\bibitem{HA94}
I.S. Helland and T.~Alm\o~y.
\newblock Comparison of prediction methods when only a few components are
  relevant.
\newblock {\em Journal of the American Statistical Association}, 89:583--591,
  1994.

\bibitem{HoerlKennard}
A.~E. Hoerl and R.~W. Kennard.
\newblock Ridge regression: Biased estimation for nonorthogonal problems.
\newblock {\em Technometrics}, 12(1):55--67, 1970.

\bibitem{JvDG15}
J.~Jankov{\'a} and S.~van~de Geer.
\newblock Confidence intervals for high-dimensional inverse covariance
  estimation.
\newblock {\em Electron. J. Statist.}, 9(1):1205--1229, 2015.

\bibitem{JM14}
A.~Javanmard and A.~Montanari.
\newblock Confidence intervals and hypothesis testing for high-dimensional
  regression.
\newblock {\em J. Mach. Learn. Res.}, 15(1):2869--2909, January 2014.

\bibitem{Lee}
J.D. Lee, D.L. Sun, Y.~Sun, and J.E. Taylor.
\newblock Exact post-selection inference, with application to the lasso.
\newblock {\em Ann. Statist.}, 44(3):907--927, 06 2016.

\bibitem{L91}
K.C. Li.
\newblock Sliced inverse regression for dimension reduction.
\newblock {\em Journal of the American Statistical Association},
  86(414):316--327, 1991.

\bibitem{M15}
N.~Meinshausen.
\newblock Group bound: confidence intervals for groups of variables in sparse
  high dimensional regression without assumptions on the design.
\newblock {\em Journal of the Royal Statistical Society: Series B (Statistical
  Methodology)}, 77(5):923--945, 2015.

\bibitem{O91}
S.D. Oman.
\newblock Random calibration with many measurements: An application of stein
  estimation.
\newblock {\em Technometrics}, 33:187--195, 1991.

\bibitem{PFD16}
E.~Perthame, F.~Forbes, and A.~Deleforge.
\newblock Inverse regression approach to robust nonlinear high-to-low
  dimensional mapping.
\newblock {\em Journal of Multivariate Analysis}, 163:1--14, 2018.

\bibitem{SvdG}
B.~Stucky and S.~van~de Geer.
\newblock Asymptotic confidence regions for highdimensional structured
  sparsity.
\newblock Technical report, arXiv, 2017.

\bibitem{Tibshirani}
R.~Tibshirani.
\newblock Regression shrinkage and selection via the lasso.
\newblock {\em Journal of the Royal Statistical Society, Series B},
  58:267--288, 1994.

\bibitem{uhlig1994}
Harald Uhlig.
\newblock On singular wishart and singular multivariate beta distributions.
\newblock {\em Ann. Statist.}, 22(1):395--405, 03 1994.

\bibitem{vDGDBRD14}
S.~van~de Geer, P.~B{\"u}hlmann, Y.~Ritov, and R.~Dezeure.
\newblock On asymptotically optimal confidence regions and tests for
  high-dimensional models.
\newblock {\em Ann. Statist.}, 42(3):1166--1202, 2014.

\bibitem{VG17}
N.~Verzelen and E.~Gassiat.
\newblock Adaptive estimation of high-dimensional signal-to-noise ratios.
\newblock {\em Bernoulli, forthcoming paper}, 2017.

\bibitem{ZZ14}
C.-H. Zhang and S.S. Zhang.
\newblock Confidence intervals for low dimensional parameters in high
  dimensional linear models.
\newblock {\em Journal of the Royal Statistical Society: Series B (Statistical
  Methodology)}, 76(1):217--242, 2014.

\end{thebibliography}

\appendix



\section{Details for the proofs}
\subsection{Notations for the proof \label{sec:notations}}
In this section, we introduce some notations useful for the proofs. 
\begin{enumerate}
\item Square root factorization of a positive definite matrix is denoted by $A^\frac12$ such as $A=(A^\frac12)^2$
\item The imaginary number $i$ is such as   $i^2=-1$
    \item Exponential of the trace of a matrix denoted by etr$(\cdot)$ returns the exponential of the sum of the diagonal terms of a matrix
    \item For $L\in \mathbb{N}^\star$, the multivariate gamma function is denoted by $\Gamma_L(\cdot)$ and defined as 
    \begin{align*}
        \Gamma_L(a) = \int_{A > 0} \text{etr}(-A)\text{det}(A)^{a-\frac12(L+1)}dA,
    \end{align*}
    where the real part of $a$ verifies $\text{Re}(a) > \frac12(L-1)$ and the integration space $A >0$ refers to $L \times L$ symmetric positive definite matrices
    \item Generalized Hayakawa polynomial introduced by \cite{crowther75} is denoted by $P_\kappa(\cdot,\cdot,\cdot)$ and defined for a complex matrix $T \in \mathcal M_{L,D}(\mathbb{C})$ and two real symmetric matrices $A \in \mathcal M_{D,D}(\mathbb{R})$ and $B \in \mathcal M_{L,L}(\mathbb{C})$ as 
    \begin{align*}
    P_\kappa(T,A,B) = \pi^{-\frac12DL}\int_U\text{etr}(-(U+iT)(U+iT)^T)C_\kappa(-BUAU^T)dU
    \end{align*}
        where $C_\kappa(S)$ is a zonal polynomial. For more details, we refer to \cite{GNbook}.
\end{enumerate}

\subsection{Proof of Theorem \ref{theoDistribGammaStar} \label{proofDistribGammaStar}}
\begin{proof}

We are interested in the distribution of
$$\widehat{\Gammab}^\star =\Sigmab+\widehat{\Ab}\Gammab  \widehat{\Ab}^{\top}.$$

From Proposition \ref{distribAchap}, we know that $\hat{\Ab} \sim \mathcal{MN}_{DL}(\Ab, \Sigmab, (\Yb^T\Yb)^{-1})$.

Remark that $\widehat{\Gammab}^\star$ is decomposed onto the sum of a diagonal matrix and a low rank matrix. This structure is general but involves a non invertible matrix.

First we focus on the distribution of $\widehat{\Ab}\Gammab  \widehat{\Ab}^{\top}$, where the randomness comes from $\widehat{\Ab}$. As $\widehat{\Ab} \in M_{D,L}(\mathbb{R})$ and $\Gammab \in M_{L,L}(\mathbb{R})$, we know that the $D \times D$ matrix $\widehat{\Ab}\Gammab  \widehat{\Ab}^{\top}$ is of rank $L$.
Then the distribution cannot be related to a Wishart distribution (arguments used in Section \ref{proofDistribSigmaStar} can not be used).



Finally, combining arguments on quadratic form developed in \cite{GNbook} and singular Wishart distributions introduced in \cite{uhlig1994}, we get the following density for $\Gammab^\star$ defined for symmetric definite positive matrices structured as the sum of a diagonal and a low rank matrix: 
\begin{align*}
& \pi^\frac{-DL+L^2}{2} \{ 2^{\frac12DL}\Gamma_L\left(\frac12L\right) \} ^{-1} \text{det}(\Sigmab)^{-\frac12L}\text{det}(\Bb)^{-\frac12D}\text{etr}\left( -\frac12 \Sigmab^{-1} \Ab \Yv^T\Yv\Ab^T\right) \nonumber \\
& \text{etr}\left( -\frac12q \Sigmab (\Gammab^\star - \Sigmab) \right) \left( \prod_{\lambda >0} \lambda(\Gammab^\star - \Sigmab)\right)^{\frac12(L-D-1)} \sum_{k=0}^\infty\sum_{\kappa} \frac{1}{(\frac12L)_\kappa k!} \nonumber \\
& P_\kappa\left( \frac{1}{\sqrt{2}}\Sigmab^{-\frac12} \Ab (\Yv^T\Yv)^{\frac12} (\mathbb I_L -q\Bb)^{-\frac12},\Bb^{-1}-q\mathbb I_L,\frac12 \Sigmab^{-\frac12} (\Gammab^\star - \Sigmab) \Sigmab^{-\frac12}\right) \nonumber
\end{align*}
 where $\lambda(A)$ corresponds to the eigenvalues of $A$ and $\Bb = (\Yv^T\Yv)^{-\frac12} \Gammab (\Yv^T\Yv)^{-\frac12} $, and $q > 0$ an arbitrary constant such that $\mathbb I_L -q\Bb$ is positive definite, and $\Gamma_L(\cdot)$, $\text{etr}(\cdot)$ and the Hayakawa polynomial $P_\kappa(\cdot,\cdot,\cdot)$ defined as in Section~\ref{sec:notations}.
\end{proof}

\subsection{Proof of Theorem \ref{theoDistribSigmaStar}\label{proofDistribSigmaStar}}

\begin{proof}
The purpose of this proof is to derive the distribution of $\widehat \Sigmab^\star = (\Gammab^{-1} + \widehat \Ab^T\Sigmab^{-1}\widehat \Ab)^{-1}$ knowing that $\widehat \Ab \sim \mathcal{MN}_{DL}\left(\Ab,\Sigmab,(\Yv^T\Yv)^{-1}\right)$ from Proposition \ref{distribAchap}. First, using Chapter 7 of \cite{GNbook}, we deduce that the quadratic form $\Sb_A = \widehat \Ab^T\Sigmab^{-1}\widehat\Ab$ has the following density: 
\begin{align}
& \{ 2^{\frac12DL}\Gamma_L\left(\frac12D\right) \} ^{-1} \text{det}((\Yv^\top\Yv)^{-1})^{-\frac12D}\text{det}(\Bb)^{-\frac12L}\text{etr}\left( -\frac12 (\Yv^\top\Yv) \Ab^\top \Sigmab^{-1}\Ab\right) \nonumber \\
& \text{etr}\left( -\frac12q \Yv^\top\Yv \Sb_A\right) \text{det}(\Sb_A)^{\frac12(D-L-1)} \sum_{k=0}^\infty\sum_{\kappa} \frac{1}{(\frac12D)_\kappa k!} \nonumber \\
& P_\kappa\left( \frac{1}{\sqrt{2}}(\Yv^\top\Yv)^\frac12 \Ab^\top \Sigmab^{-\frac12} (\mathbb I_D -q\Bb)^{-\frac12},\Bb^{-1}-q\mathbb I_D,\frac12 (\Yv^\top\Yv)^\frac12 \Sb_A (\Yv^\top\Yv)^\frac12\right) \nonumber
\end{align}
 defined for $\Sb_A > 0$, with $\Bb=\Sigmab^{\frac12}\Sigmab^{-1}\Sigmab^{\frac12} = \mathbb I_D$ as $\Sigmab$ is diagonal, $0 < q < 1$ an arbitrary constant, and $\Gamma_L(\cdot)$, $\text{etr}(\cdot)$ and the Hayakawa polynomial $P_\kappa(\cdot,\cdot,\cdot)$ defined as in Section~\ref{sec:notations}. Therefore the density of $\Sb_A$ simplifies
 \begin{align}
 & \{ 2^{\frac12DL}\Gamma_L\left(\frac12D\right) \} ^{-1} \text{det}(\Yv^\top\Yv)^{\frac12D}\text{etr}\left( -\frac12 (\Yv^\top\Yv) \Ab^\top \Sigmab^{-1}\Ab\right) \nonumber \\
& \text{etr}\left( -\frac12q \Yv^\top\Yv \Sb_A\right) \text{det}(\Sb_A)^{\frac12(D-L-1)} \sum_{k=0}^\infty\sum_{\kappa} \frac{1}{(\frac12D)_\kappa k!} \label{eq:density1} \\
& P_\kappa\left( \frac{(1-q)^{-\frac12}}{\sqrt{2}}(\Yv^\top\Yv)^\frac12 \Ab^\top \Sigmab^{-\frac12},(1-q)\mathbb I_D,\frac12 (\Yv^\top\Yv)^\frac12 \Sb_A (\Yv^\top\Yv)^\frac12\right) \nonumber
 \end{align}
 Remark that this density is related to a Wishart distribution, because we consider the quadratic form associated to a Gaussian random variable, the transformation being through a linear application $A$ leads to more complex formulae.
 
Then, transforming $\Tb = \Gammab^{-1}+\Sb_A$ we obtain the density of $(\Sigmab^\star)^{-1}$ as a function of $\Tb$ 
 \begin{align}
 f_{(\Sigmab^\star)^{-1}}(\mathbf{T}) =  f_{\Sb_A}(\Tb - \Gammab^{-1}), \text{defined for definite positive matrix } \Tb  \nonumber
  \end{align}
  where $f_{\Sb_A}$ refers to the density of $\Sb_A$ defined in Equation~\eqref{eq:density1}. Next, transforming $\Sigmab^\star = \Tb^{-1}$, with the Jacobian $J(\Tb \mapsto \Sigmab^\star) = \text{det}(\Sigmab^\star)^{L+1}$ we obtain the density of $\Sigmab^\star$
 \begin{align}
    f_{\Sigmab^\star}(\mathbf{T}) = f_{\Sb_A}((\Sigmab^\star)^{-1} - \Gammab^{-1})\text{det}(\Sigmab^\star)^{(L+1)}
 \end{align}
defined for positive definite matrix $\Sigmab^\star$, which gives the following final density using notation of Equation~\eqref{eq:density1}:
 \begin{align}
 & \{ 2^{\frac12DL}\Gamma_L\left(\frac12D\right) \} ^{-1} \text{det}(\Yv^\top\Yv)^{\frac12D}\text{det}(\Sigmab^\star)^{(L+1)} \nonumber \\
 &\text{etr}\left( -\frac12 (\Yv^\top\Yv) \Ab^\top \Sigmab^{-1}\Ab\right) \nonumber \text{etr}\left( -\frac12q \Yv^\top\Yv \left((\Sigmab^\star)^{-1} - \Gammab^{-1}\right)\right)  \\
 &\text{det}((\Sigmab^\star)^{-1} - \Gammab^{-1})^{\frac12(D-L-1)} \sum_{k=0}^\infty\sum_{\kappa} \frac{1}{(\frac12D)_\kappa k!} \nonumber \\
& P_\kappa\left( \frac{(1-q)^{-\frac12}}{\sqrt{2}}(\Yv^\top\Yv)^\frac12 \Ab \Sigmab^{-\frac12},(1-q)\mathbb I_D,\frac12 (\Yv^\top\Yv)^\frac12 ((\Sigmab^\star)^{-1} - \Gammab^{-1}) (\Yv^\top\Yv)^\frac12\right) \nonumber
 \end{align}
 with functions $\Gamma_L(\cdot)$ and $\text{etr}(\cdot)$ and the Hayakawa polynomial $P_\kappa(\cdot,\cdot,\cdot)$ defined as in Section~\ref{sec:notations}. 
 
  \end{proof}

\subsection{Proof of Lemma \ref{computeDiff}}
\label{proofLemma1}
\begin{proof}
We use the following lemma.
\begin{lemma}
If $\| A \| \leq 1$, then $(\mathbb I - \Ab)^{-1} = \mathbb I + \Ab + \Ab^2 + o(||\Ab||^2)$.
\end{lemma}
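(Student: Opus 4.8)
The plan is to invoke the Neumann series for $(\mathbb I-\Ab)^{-1}$ and then isolate the third-order tail. Throughout, $\|\cdot\|$ denotes a submultiplicative matrix norm (for instance the operator norm, or the Frobenius norm), and although the statement is written with $\|\Ab\|\le 1$, the expansion is only ever needed for $\Ab$ in a neighbourhood of $0$, so one may and I would assume $\|\Ab\|<1$, which is the regime in which the lemma is applied inside the proof of Lemma~\ref{computeDiff}.

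First I would establish convergence of $\sum_{k\ge 0}\Ab^k$. Submultiplicativity gives $\|\Ab^k\|\le\|\Ab\|^k$, so the partial sums $S_n=\sum_{k=0}^n\Ab^k$ satisfy $\|S_m-S_n\|\le\sum_{k=n+1}^{m}\|\Ab\|^k\to 0$ as $m,n\to\infty$ since $\|\Ab\|<1$; hence $(S_n)$ is Cauchy and converges to some matrix $S$. A telescoping computation gives $S_n(\mathbb I-\Ab)=(\mathbb I-\Ab)S_n=\mathbb I-\Ab^{\,n+1}\to\mathbb I$, so passing to the limit $S(\mathbb I-\Ab)=(\mathbb I-\Ab)S=\mathbb I$, i.e.\ $S=(\mathbb I-\Ab)^{-1}=\sum_{k\ge 0}\Ab^k$.

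Then I would write the remainder as $R(\Ab):=(\mathbb I-\Ab)^{-1}-\mathbb I-\Ab-\Ab^2=\sum_{k\ge 3}\Ab^k=\Ab^3\sum_{k\ge 0}\Ab^k=\Ab^3(\mathbb I-\Ab)^{-1}$, whence $\|R(\Ab)\|\le \|\Ab\|^3/(1-\|\Ab\|)$. Since $\|\Ab\|^3/(1-\|\Ab\|)=o(\|\Ab\|^2)$ as $\|\Ab\|\to 0$, this yields $(\mathbb I-\Ab)^{-1}=\mathbb I+\Ab+\Ab^2+o(\|\Ab\|^2)$, as claimed. There is essentially no obstacle here: the only points requiring a word of care are the choice of a submultiplicative norm and the harmless replacement of ``$\le 1$'' by ``$<1$'', both of which I would flag explicitly.
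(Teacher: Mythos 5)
Your proof is correct and is the standard Neumann-series argument; the paper itself states this auxiliary lemma without any proof inside the proof of Lemma~\ref{computeDiff}, so there is no alternative route to compare against. One point you raise is worth keeping: the hypothesis $\|\Ab\|\le 1$ is not sufficient as written (for $\Ab=\mathbb I$ the matrix $\mathbb I-\Ab$ is singular and the series diverges), and your replacement by the strict inequality $\|\Ab\|<1$ is the correct reading, consistent with the only use of the lemma, namely an expansion around $\Ab=\mathbf{0}$ applied to the vanishing increment $\widehat\Ab-\Ab$.
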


\begin{align*}
g(\Ab + h\Mb) - g(\Ab) & = h (\Gammab^{-1} + \Ab^\top \Sigmab^{-1} \Ab)^{-1} \Mb^\top \Sigmab^{-1}  \\
&- h (\Gammab^{-1} + \Ab^\top \Sigmab^{-1} \Ab)^{-1} (\Mb^\top\Sigmab^{-1}\Ab + \Ab^\top\Sigmab^{-1}\Mb) \\
& \times (\Gammab^{-1} + \Ab^\top \Sigmab^{-1} \Ab)^{-1} \Ab \Sigmab^{-1}\\
& + O(h^2)\\
Dg(\Ab) . M &=  (\Gammab^{-1} + \Ab^\top \Sigmab^{-1} \Ab)^{-1} \Mb^\top \Sigmab^{-1}  \\
&- (\Gammab^{-1} + \Ab^\top \Sigmab^{-1} \Ab)^{-1} (\Mb^\top\Sigmab^{-1}\Ab + \Ab^\top\Sigmab^{-1}\Mb) \\
& \times  (\Gammab^{-1} + \Ab^\top \Sigmab^{-1} \Ab)^{-1} \Ab \Sigmab^{-1}
\end{align*}
Next, remember that $h\Mb = (\widehat \Ab - \Ab)$, we have :
\begin{align}
Dg(\Ab) . (\widehat \Ab - \Ab) &=  (\Gammab^{-1} + \Ab^\top \Sigmab^{-1} \Ab)^{-1} [\widehat \Ab - \Ab]^\top \Sigmab^{-1} \label{eq:DFA} \nonumber \\
&- (\Gammab^{-1} + \Ab^\top \Sigmab^{-1} \Ab)^{-1} \left([\widehat \Ab - \Ab]^\top\Sigmab^{-1}\Ab + \Ab^\top\Sigmab^{-1}[\widehat \Ab - \Ab]\right) \nonumber \\ 
& \times (\Gammab^{-1} + \Ab^\top \Sigmab^{-1} \Ab)^{-1} \Ab \Sigmab^{-1} \nonumber\\
\end{align}

Then, we compute the covariance.
We  decompose  it as the following.
\begin{align*}
Cov(\text{vec}(Dg(\Ab) . (\widehat \Ab - \Ab))) &= 
var(\text{vec}(\Sigmab^\star (\widehat \Ab - \Ab)^\top\Sigmab^{-1})) \\
& + var(\text{vec}( \Sigmab^\star (\widehat \Ab - \Ab)^\top\Sigmab^{-1} \Ab \Ab^\star)) \\
&+ var(\text{vec}(\Ab^\star (\widehat \Ab - \Ab) \Ab^\star))\\
& -2 cov(\text{vec}(\Sigmab^\star (\widehat \Ab - \Ab)^\top\Sigmab^{-1}), \text{vec}(\Sigmab^\star (\widehat \Ab - \Ab)^\top\Sigmab^{-1} \Ab \Ab^\star)) \\
&-2 cov (\text{vec}(\Sigmab^\star (\widehat \Ab - \Ab)^\top\Sigmab^{-1}), \text{vec}(\Ab^\star (\widehat \Ab - \Ab) \Ab^\star))\\
&-2 cov (\text{vec}(\Sigmab^\star (\widehat \Ab - \Ab)^\top\Sigmab^{-1} \Ab \Ab^\star), \text{vec}(\Ab^\star (\widehat \Ab - \Ab) \Ab^\star))
\end{align*}
Then, we want to compute each term explicitly.
\begin{align*}
var(\text{vec}(\Sigmab^\star (\widehat \Ab - \Ab)^\top\Sigmab^{-1})) &= (\Sigmab^{-1} \otimes \Sigmab^\star \Gammab \Sigmab^\star)\\
 var(\text{vec}( \Sigmab^\star (\widehat \Ab - \Ab)^\top\Sigmab^{-1} \Ab \Ab^\star))&= ((\Ab^\star)^\top \Ab^\top \Sigmab^{-1} \Ab \Ab^\star \otimes \Sigmab^\star \Gammab \Sigmab^\star)\\
var(\text{vec}(\Ab^\star (\widehat \Ab - \Ab) \Ab^\star))&= ((\Ab^\star)^\top \Gammab \Ab^\star \otimes \Ab^\star \Sigmab (\Ab^\star)^\top )\\
cov(\text{vec}(\Sigmab^\star (\widehat \Ab - \Ab)^\top\Sigmab^{-1}), \text{vec}(\Sigmab^\star (\widehat \Ab - \Ab)^\top\Sigmab^{-1} \Ab \Ab^\star))&= (\Sigmab^{-1} \Ab \Ab^\star  \otimes \Sigmab^\star \Gammab \Sigmab^\star)\\
cov (\text{vec}(\Sigmab^\star (\widehat \Ab - \Ab)^\top\Sigmab^{-1}), \text{vec}(\Ab^\star (\widehat \Ab - \Ab) \Ab^\star))&= 
(\mathbf{I} \otimes \Sigmab^\star \Gammab) T^{-1}_{LD}((\Ab^\star)^\top \otimes \Ab^\star)
\\
cov (\text{vec}(\Sigmab^\star (\widehat \Ab - \Ab)^\top\Sigmab^{-1} \Ab \Ab^\star), \text{vec}(\Ab^\star (\widehat \Ab - \Ab) \Ab^\star)) &= ((\Ab^\star)^\top \Ab^\top \otimes \Sigmab^\star \Gammab)  T^{-1}_{LD}ÃÂ  ((\Ab^\star)^\top \otimes \Ab^\star)
\end{align*}

Putting everything together, we get the following.

\begin{align*}
Cov(\text{vec}(Dg(\Ab) . (\widehat \Ab - \Ab))) &= 
(\Sigmab^{-1} \otimes \Sigmab^\star \Gammab \Sigmab^\star) +
((\Ab^\star)^\top \Ab^\top \Sigmab^{-1} \Ab \Ab^\star \otimes \Sigmab^\star \Gammab \Sigmab^\star)\\ &+
((\Ab^\star)^\top \Gammab \Ab^\star \otimes \Ab^\star \Sigmab (\Ab^\star)^\top ) - 2 
(\Sigmab^{-1} \Ab \Ab^\star  \otimes \Sigmab^\star \Gammab \Sigmab^\star) \\&-2
(\mathbf{I} \otimes \Sigmab^\star \Gammab) T^{-1}_{LD}((\Ab^\star)^\top \otimes \Ab^\star) \\& -2
((\Ab^\star)^\top \Ab^\top \otimes \Sigmab^\star \Gammab)  T^{-1}_{LD}ÃÂ  ((\Ab^\star)^\top \otimes \Ab^\star) \\
&=\left((\Sigmab^{-1} + (\Ab^\star)^\top \Ab^\top \Sigmab^{-1} \Ab \Ab^\star -2 \Sigmab^{-1} \Ab \Ab^\star)\otimes \Sigmab^\star \Gammab \Sigmab^\star\right) \\
&+ ((\Ab^\star)^\top \Gammab \Ab^\star \otimes \Ab^\star \Sigmab (\Ab^\star)^\top ) \\
&-2 \left(
(\mathbf{I} \otimes \Sigmab^\star \Gammab)+
((\Ab^\star)^\top \Ab^\top \otimes \Sigmab^\star \Gammab) \right)  T^{-1}_{LD}ÃÂ  ((\Ab^\star)^\top \otimes \Ab^\star)
\end{align*}

\end{proof}

\section{Computation for univariate response - easier to understand}
Whereas the method becomes less interesting for $L=1$, because we reduce the problem to 1 dimension through the inversion method, we detail here the theoretical result for the scalar response case as computations are easier to derive and to understand. The only goal of this section is then to be pedagogical.

\label{L=1}

\subsection[Asymptotic normality of regression coefficients]{Asymptotic normality of $\widehat{\Ab}^\star$}

When we consider a real response, the $\Delta$-method is used to deduce the distribution of $\widehat{\Ab}^\star \in \mathbb{R}^D$ from the distribution of $\widehat{\Ab} \in \mathbb{R}^D$.
To highlight the univariate response, we denote $\gamma = \Gammab \in \mathbb{R}$ and $s^\star = \Sigmab^\star \in \mathbb{R}$.
First, let recall the distribution of the least square estimator.

\begin{prop}[Distribution of $\widehat{\Ab}$]
\label{distribA}
Suppose $((\Xb_1,\Yb_1),\ldots, (\Xb_N,\Yb_N))$ is a sequence of $iid$ random variables satisfying the inverse regression model defined in Equations \eqref{eq:model::Y} and \eqref{eq:model::X|Y}.
Then, the following holds for the estimator defined in Equation \eqref{AChap}.
\begin{align*}
\widehat{\Ab}ÃÂÃÂÃÂÃÂ &\sim \mathcal{N}_D(\Ab, \Sigmab (\Yv^\top \Yv)^{-1}).
\end{align*}
\end{prop}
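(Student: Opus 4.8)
The plan is to reduce the statement to the elementary fact that, conditionally on the responses, the least-squares estimator in a Gaussian linear model is Gaussian. First I would condition on $(\Yv_1,\ldots,\Yv_N)$ and rewrite $\widehat{\Ab}$ from Equation~\eqref{AChap}: in the scalar-response case $\Yv^\top\Yv=\sum_{i=1}^N \Yv_i^2$ is an (almost surely positive) scalar, so that, up to the transpose convention, $\widehat{\Ab}=(\Yv^\top\Yv)^{-1}\sum_{i=1}^N \Yv_i\,\Xv_i$.

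Next I would substitute the inverse model~\eqref{eq:model::X|Y}, which for $L=1$ reads $\Xv_i=\Ab\,\Yv_i+\mathbf{e}_i$ with $\mathbf{e}_1,\ldots,\mathbf{e}_N$ i.i.d.\ $\mathcal{N}_D(\mathbf{0},\Sigmab)$ and independent of the responses. This would give
\begin{align*}
\widehat{\Ab}-\Ab=(\Yv^\top\Yv)^{-1}\sum_{i=1}^N \Yv_i\,\mathbf{e}_i ,
\end{align*}
so that, conditionally on $(\Yv_1,\ldots,\Yv_N)$, the estimation error is a fixed linear combination of independent centred Gaussian vectors, hence itself a centred Gaussian vector of $\mathbb{R}^D$. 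It then remains only to compute its covariance: using independence of the $\mathbf{e}_i$ and $\mathrm{Cov}(\mathbf{e}_i)=\Sigmab$,
\begin{align*}
\mathrm{Cov}\big(\widehat{\Ab}-\Ab \mid \Yv\big)=(\Yv^\top\Yv)^{-2}\Big(\sum_{i=1}^N \Yv_i^2\Big)\Sigmab=(\Yv^\top\Yv)^{-1}\Sigmab=\Sigmab\,(\Yv^\top\Yv)^{-1},
\end{align*}
the last equality because $(\Yv^\top\Yv)^{-1}$ is a scalar. This yields $\widehat{\Ab}\mid\Yv\sim\mathcal{N}_D\big(\Ab,\Sigmab\,(\Yv^\top\Yv)^{-1}\big)$, which is the claim; it is also the $L=1$ case of Proposition~\ref{distribAchap}, since by definition of the matrix normal law $\mathcal{MN}_{D1}\big(\Ab,\Sigmab,(\Yv^\top\Yv)^{-1}\big)$ is exactly $\mathcal{N}_D\big(\Ab,(\Yv^\top\Yv)^{-1}\otimes\Sigmab\big)=\mathcal{N}_D\big(\Ab,\Sigmab\,(\Yv^\top\Yv)^{-1}\big)$.

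I do not expect a genuine obstacle here: the argument is the standard ``conditional on the design'' computation. The two points deserving a little care are the transpose bookkeeping in the matrix expression~\eqref{AChap} for $\widehat{\Ab}$ (for $L=1$ the estimator is a $D$-vector, to be identified with the corresponding column), and making explicit that the reported law is conditional on $\Yv$ --- equivalently, that $\Yv^\top\Yv$ enters the variance as a deterministic scalar rather than a random quantity.
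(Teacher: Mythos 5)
Your argument is correct and is exactly the standard ``conditional on the design'' computation that the paper has in mind when it declares the proof of the matrix-normal version (Proposition~\ref{distribAchap}) straightforward and omits it. Your explicit remark that the stated law is conditional on $\Yv$ (so that $\Yv^\top\Yv$ enters the variance as a fixed scalar) is a useful clarification that the paper itself glosses over.
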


In Proposition \ref{propNablag}, we define the function $g : \Ab \mapsto \Ab^\star$ and compute its gradient.

\begin{prop}
\label{propNablag}
Let 
\begin{align*}
g : \mathbb{R}^{D \times 1} &\rightarrow \mathbb{R}^{1 \times D} \\
 \Ab &\mapsto \Ab^\star =  s^\star\Ab^\top\Sigmab^{-1} = (\Gammab^{-1} + \Ab^\top\Sigmab^{-1}\Ab)^{-1}\Ab^\top\Sigmab^{-1}
\end{align*}
Then, 
$$
\nabla g(\Ab) = - 2s^\star\Sigmab^{-1}\Ab\Ab^\star + s^\star(\partial \Ab)^\top\Sigmab^{-1}
$$
where $(\partial \Ab)=\mathbb I_D$ is the differentiation of $\Ab$.
\end{prop}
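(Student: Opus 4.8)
The plan is to obtain $\nabla g$ by a first-order perturbation of $g$, which here is considerably lighter than the proof of Lemma~\ref{computeDiff} because $L=1$ turns $\Gammab^{-1}$ and $\Sigmab^\star=s^\star$ into scalars. In fact this proposition is just the specialisation $L=1$ of Equation~\eqref{eq:diffg}: in that formula the middle term $\Sigmab^\star(\widehat\Ab-\Ab)^\top\Sigmab^{-1}\Ab\Ab^\star$ and the last term $\Ab^\star(\widehat\Ab-\Ab)\Ab^\star$ become, respectively, $s^\star(\Mb^\top\Sigmab^{-1}\Ab)\Ab^\star$ and $s^\star(\Ab^\top\Sigmab^{-1}\Mb)\Ab^\star$, which coincide by symmetry of $\Sigmab^{-1}$, so the three terms collapse to two. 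I will nonetheless give a direct argument.

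First I would expand, for a direction $\Mb\in\mathbb{R}^{D\times 1}$ and $h\to 0$,
\[
\gamma^{-1}+(\Ab+h\Mb)^\top\Sigmab^{-1}(\Ab+h\Mb)=(s^\star)^{-1}+2h\,\Mb^\top\Sigmab^{-1}\Ab+O(h^2),
\]
using that $\Sigmab^{-1}$ is symmetric, and invert this scalar via $(1+x)^{-1}=1-x+o(x)$ to get $s^\star-2h(s^\star)^2\Mb^\top\Sigmab^{-1}\Ab+O(h^2)$. Multiplying by $(\Ab+h\Mb)^\top\Sigmab^{-1}=\Ab^\top\Sigmab^{-1}+h\Mb^\top\Sigmab^{-1}$ and keeping the term linear in $h$ yields the directional derivative
\[
Dg(\Ab).\Mb=s^\star\Mb^\top\Sigmab^{-1}-2(s^\star)^2(\Mb^\top\Sigmab^{-1}\Ab)\Ab^\top\Sigmab^{-1}\in\mathbb{R}^{1\times D}.
\]
Recognising $s^\star\Ab^\top\Sigmab^{-1}=\Ab^\star$ and treating $\Mb^\top\Sigmab^{-1}\Ab$ as a scalar, this equals $s^\star\Mb^\top\Sigmab^{-1}-2s^\star(\Mb^\top\Sigmab^{-1}\Ab)\Ab^\star=\Mb^\top\bigl(s^\star(\partial\Ab)^\top\Sigmab^{-1}-2s^\star\Sigmab^{-1}\Ab\Ab^\star\bigr)$ with $\partial\Ab=\mathbb{I}_D$, which identifies $\nabla g(\Ab)$ (read off from $Dg(\Ab).\Mb=\Mb^\top\nabla g(\Ab)$) as the asserted expression.

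I do not anticipate any genuine obstacle; the only care needed is the bookkeeping of $1\times D$ versus $D\times 1$ shapes when reading $\nabla g(\Ab)$ off from $Dg(\Ab).\Mb$, and the consistent use of the symmetry of $\Sigmab^{-1}$ to identify $\Mb^\top\Sigmab^{-1}\Ab$ with $\Ab^\top\Sigmab^{-1}\Mb$ — precisely what makes the middle and last terms of the general formula merge here.
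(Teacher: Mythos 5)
Your proposal is correct and follows essentially the same route as the paper: the paper's proof of Proposition~\ref{propNablag} applies the product and chain rules to $s^\star$ and $\Ab^\top\Sigmab^{-1}$ with $\partial(s^\star)=-2(s^\star)^2\Sigmab^{-1}\Ab$, which is exactly the linear-in-$h$ term of your perturbation expansion, and your observation that the middle and last terms of Equation~\eqref{eq:diffg} merge when $L=1$ by symmetry of $\Sigmab^{-1}$ correctly recovers the factor $2$. No gap; the only presentational difference is that you work with the directional derivative $Dg(\Ab).\Mb$ (mirroring the proof of Lemma~\ref{computeDiff}) while the paper uses informal $\partial$ notation.
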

\begin{proof}
As $g$ is the product of $s^\star$ and $\Ab^\top\Sigmab^{-1}$, we have 
$$
\nabla g = \partial(s^\star)\Ab^\top\Sigmab^{-1} + s^\star \partial(\Ab^\top\Sigmab^{-1}) $$

As $L=1$, we have $\partial A=\mathbf I_D$.

 Next, we need to compute $\partial(\Ab^\top\Sigmab^{-1})$:
$$
\partial(\Ab^\top\Sigmab^{-1}) = \partial(\Ab)^\top \Sigmab^{-1}
$$

Finally, we  compute $\partial(s^\star)$: as $ \partial(\gamma^{-1} + \Ab^\top\Sigmab^{-1}\Ab ) = 2\Sigmab^{-1}\Ab$,
$$
 \partial(s^\star) =  - s^\star \partial(\gamma^{-1} + \Ab^\top\Sigmab^{-1}\Ab ) s^\star.
$$

\end{proof}

\begin{theorem}[Asymptotic normality of $\widehat{\Ab}^\star$] 
\label{asympNormalityL1}
Suppose $((\Xb_1,\Yb_1),\ldots, (\Xb_N,\Yb_N))$ is a sequence of $iid$ random variables  satisfying the  model defined in Equations \eqref{eq:model::X} and \eqref{eq:model::Y|X}.
Let 
\begin{align*}
g : \mathbb{R}^{D \times 1} &\rightarrow \mathbb{R}^{1 \times D} \\
 \Ab &\mapsto \Ab^\star =  s^\star\Ab^\top\Sigmab^{-1} = (\Gammab^{-1} + \Ab^\top\Sigmab^{-1}\Ab)^{-1}\Ab^\top\Sigmab^{-1}
\end{align*}
and let $\nabla g(\Ab)$  the gradient of  $g$ evaluated in a matrix $\Ab$.
Then, the following holds for the estimator $\widehat{\Ab}^\star = g(\widehat{A})$ defined in Equation \eqref{AChapStar}.
\begin{align}
\sqrt{N} (\widehat \Ab^\star -  \Ab^\star) \underset{N \rightarrow +\infty}{\rightarrow} \mathcal N_D(0, \Gammab^{-1} \nabla g( \Ab)^\top  \Sigmab \nabla g( \Ab)).\label{asympNormalityAChapStarL1}
\end{align}
Moreover, let $P$ be the Cholesky decomposition of $\Sigmab$: $\Sigmab = P^\top P$. Then, 
\begin{align}
\sqrt{N} \sqrt{\gamma} (\nabla g( \widehat\Ab) P)^{-1} (\widehat \Ab^\star -  \Ab^\star) \underset{N \rightarrow +\infty}{\rightarrow} \mathcal N_D(0, \mathbf{I}).\label{asympNormalityAChapStarL1Slustky}
\end{align}

\end{theorem}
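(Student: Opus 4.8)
The plan is to prove the first convergence by the vector-valued delta method applied to $g$ at the point $\Ab$, and to deduce the second one by standardising the Gaussian limit with the Cholesky factor $P$ of $\Sigmab$ and invoking Slutsky's lemma to pass from $\nabla g(\Ab)$ to $\nabla g(\widehat{\Ab})$. The two ingredients are the asymptotic normality of the inverse least-squares estimator $\widehat{\Ab}$ of \eqref{AChap}, and the differentiability of $g$ with the gradient already computed in Proposition~\ref{propNablag}.

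First I would establish the asymptotic normality of $\widehat{\Ab}$. In the inverse model \eqref{eq:model::Y}--\eqref{eq:model::X|Y} with $L=1$, the scalar responses $Y_1,\dots,Y_N$ are i.i.d.\ $\mathcal N(0,\gamma)$ and the residuals $\mathbf e_1,\dots,\mathbf e_N$ are i.i.d.\ $\mathcal N_D(\mathbf 0,\Sigmab)$, independent of the $Y_i$; the least-squares estimator satisfies $\widehat{\Ab}-\Ab=\big(\sum_i Y_i^2\big)^{-1}\sum_i Y_i\,\mathbf e_i$, whose conditional law given $\Yv$ is exactly that of Proposition~\ref{distribA}. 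The strong law gives $\tfrac1N\sum_i Y_i^2\to\gamma$ almost surely, while the multivariate central limit theorem applied to the i.i.d.\ mean-zero vectors $Y_i\mathbf e_i$, which have covariance $\mathbb E[Y_i^2]\,\Sigmab=\gamma\Sigmab$, gives $\tfrac1{\sqrt N}\sum_i Y_i\mathbf e_i\underset{N\to\infty}{\longrightarrow}\mathcal N_D(\mathbf 0,\gamma\Sigmab)$ in distribution; Slutsky's lemma then yields $\sqrt N(\widehat{\Ab}-\Ab)\underset{N\to\infty}{\longrightarrow}\mathcal N_D(\mathbf 0,\gamma^{-1}\Sigmab)$, and in particular $\widehat{\Ab}\to\Ab$ in probability.

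Next I would note that on a neighbourhood of $\Ab$ the matrix $\Gammab^{-1}+\Ab^\top\Sigmab^{-1}\Ab$ is positive definite, so $g$ is $C^1$ there, with first-order expansion $g(\Ab+M)-g(\Ab)=M^\top\nabla g(\Ab)+o(\|M\|)$ given by Proposition~\ref{propNablag}. Taking $M=\widehat{\Ab}-\Ab=O_p(N^{-1/2})$, the remainder is $o_p(N^{-1/2})$, hence $\sqrt N(\widehat{\Ab}^\star-\Ab^\star)=\nabla g(\Ab)^\top\,\sqrt N(\widehat{\Ab}-\Ab)+o_p(1)$, and combining with the previous step gives
$$\sqrt N\big(\widehat{\Ab}^\star-\Ab^\star\big)\underset{N\to\infty}{\longrightarrow}\mathcal N_D\big(\mathbf 0,\ \gamma^{-1}\nabla g(\Ab)^\top\Sigmab\,\nabla g(\Ab)\big),$$
which is \eqref{asympNormalityAChapStarL1}. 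For \eqref{asympNormalityAChapStarL1Slustky} I would write $\Sigmab=P^\top P$, so that the limiting covariance factors through $P\,\nabla g(\Ab)$; left-multiplying the limit by $\sqrt\gamma$ times the corresponding inverse Cholesky-type factor turns it into a standard $\mathcal N_D(\mathbf 0,\mathbf I)$, and since $\Ab\mapsto\nabla g(\Ab)$ is continuous and $\widehat{\Ab}\to\Ab$ in probability, Slutsky's lemma allows replacing $\nabla g(\Ab)$ by $\nabla g(\widehat{\Ab})$ in the standardising matrix.

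The main obstacle is the rigorous bookkeeping of the delta method in this random-design setting rather than any single computation: one must (i) control the Taylor remainder of $g$, which comes down to the local $C^1$-regularity of $g$ together with the rate $\|\widehat{\Ab}-\Ab\|=O_p(N^{-1/2})$ from the first step, and (ii) handle the fact that Proposition~\ref{distribA} gives the law of $\widehat{\Ab}$ only \emph{conditionally} on $\Yv$, so that obtaining the unconditional Gaussian limit genuinely requires the law-of-large-numbers control $\tfrac1N\sum_i Y_i^2\to\gamma$ and a central limit theorem, not a direct reading of that proposition.
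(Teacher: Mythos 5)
Your proposal is correct and follows essentially the same route as the paper: the delta method applied to $g$ at $\widehat{\Ab}$ using the gradient of Proposition~\ref{propNablag}, followed by Slutsky's lemma to replace $\nabla g(\Ab)$ by $\nabla g(\widehat{\Ab})$ in the standardisation. The only difference is that you are more careful than the paper's two-line proof about the fact that Proposition~\ref{distribA} gives the law of $\widehat{\Ab}$ only conditionally on $\Yv$, so that the unconditional limit $\sqrt{N}(\widehat{\Ab}-\Ab)\to\mathcal N_D(\mathbf 0,\gamma^{-1}\Sigmab)$ must be obtained via the law of large numbers for $\tfrac1N\sum_i Y_i^2$ and a central limit theorem for $\tfrac1{\sqrt N}\sum_i Y_i\mathbf e_i$ — a point the paper glosses over but which your write-up handles correctly.
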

\begin{proof}
Equation \eqref{asympNormalityAChapStarL1} relies on the $\Delta$-method applied to $\widehat{\Ab}$ which is Gaussian, as detailed in Proposition \ref{distribA}, through the function $g$ defined in Proposition \ref{propNablag}.

Equation \eqref{asympNormalityAChapStarL1Slustky} relies on Slutsky Lemma, which is used because $\widehat\Ab$ converges in probability to $\Ab$. 
\end{proof}

\subsection{Confidence region for $\widehat{\Ab^\star}$}
From Theorem \ref{asympNormalityL1},  confidence regions for $\widehat{\Ab}^\star$ are deduced using the following lemma, which makes the link between $\chi^2$ distribution and multivariate Gaussian distribution.
\begin{lemma}
\label{chi2}
If $X \sim \mathcal N_k(\mu,\Sigma)$ with $\Sigma$ known, then a confidence region for $\mu$ at level $1-\alpha$ is $\mathcal{R}_{\mu,\alpha}$, with
\begin{align*}
\mathcal{R}_{\mu, \alpha} &= \left\{ x \in \mathbb R^k \text{ such that }(x-\widehat\mu)^\top\Sigma^{-1}(x-\widehat\mu) \leq \chi^2_k(1-\alpha) \right\}
\end{align*}
where $\chi^2_D (1-\alpha)$ is the $(1-\alpha)$ quantile of the $\chi^2$ distribution with $k$ degrees of freedom.

If $X \sim \mathcal N_k(\mu,\Sigma)$ with $\Sigma$ unknown, then a confidence region for $\mu$ at level $1-\alpha$ is $\tilde{\mathcal{R}}_{\mu,\alpha}$, with
\begin{align*}
\tilde{\mathcal{R}}_{\mu, \alpha} &= \left\{ x \in \mathbb R^k \text{ such that } n(x-\widehat\mu)^\top S ^{-1}(x-\widehat\mu) \leq T^2_{k,n-1}(1-\alpha) \right\}
\end{align*}
with $S = 1/(n-1) \sum_i (x_i - \bar{x})(x_i - \bar{x})^\top$ the empirical covariance, and $T^2_{k,n-1}(1-\alpha)$ the quantile of the Hotelling's $T^2$ distribution with parameters $k$ and $n-1$.
\end{lemma}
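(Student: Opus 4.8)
The plan is to treat the two cases separately, each reducing to a classical distributional fact about Gaussian samples, and then to invert the resulting probability statement.

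\textbf{Known covariance.} First I would whiten the Gaussian vector: since $\Sigma$ is symmetric positive definite it admits a square root $\Sigma^{1/2}$, and setting $Z = \Sigma^{-1/2}(X-\mu)$ gives $Z \sim \mathcal{N}_k(\mathbf{0},\mathbf{I}_k)$, so $Z$ has independent standard normal coordinates. Hence
\begin{align*}
(X-\mu)^\top \Sigma^{-1}(X-\mu) = Z^\top Z = \sum_{j=1}^k Z_j^2 \sim \chi^2_k
\end{align*}
by the very definition of the $\chi^2_k$ distribution. Writing $\widehat\mu$ for the observed value of the estimator $X$ and using that the Mahalanobis quadratic form is symmetric under $x \leftrightarrow \widehat\mu$, the event $\{\mu \in \mathcal{R}_{\mu,\alpha}\}$ coincides with $\{(\widehat\mu-\mu)^\top \Sigma^{-1}(\widehat\mu-\mu) \le \chi^2_k(1-\alpha)\}$, whose probability is $1-\alpha$ once $\chi^2_k(1-\alpha)$ is taken to be the $(1-\alpha)$-quantile. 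This settles the first assertion.

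\textbf{Unknown covariance.} In this case $\widehat\mu = \bar x = n^{-1}\sum_i x_i$ and $S = (n-1)^{-1}\sum_i (x_i-\bar x)(x_i-\bar x)^\top$ are formed from $n$ iid $\mathcal{N}_k(\mu,\Sigma)$ observations, and I would invoke two standard facts of multivariate Gaussian sampling theory: (i) $\sqrt{n}(\bar x - \mu)\sim\mathcal{N}_k(\mathbf{0},\Sigma)$ and $(n-1)S\sim W_k(\Sigma,n-1)$ are independent (Cochran--Fisher); and (ii) by definition of the Hotelling distribution, if $z\sim\mathcal{N}_k(\mathbf{0},\Sigma)$ and $M\sim W_k(\Sigma,m)$ are independent then $m\,z^\top M^{-1}z\sim T^2_{k,m}$. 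Applying (ii) with $z=\sqrt{n}(\bar x-\mu)$, $M=(n-1)S$ and $m=n-1$ gives
\begin{align*}
n(\bar x-\mu)^\top S^{-1}(\bar x-\mu) \sim T^2_{k,n-1},
\end{align*}
and the same inversion as above yields $\mathbb{P}(\mu\in\tilde{\mathcal{R}}_{\mu,\alpha})=1-\alpha$.

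\textbf{Main obstacle.} There is no genuine difficulty here: the only non-elementary ingredients are the independence of $\bar x$ and $S$ together with the Wishart law of $(n-1)S$, which are cited from standard references, and one must simply be careful to match the normalization in the definition of $T^2_{k,n-1}$ (equivalently one may phrase the region through the $F$-distribution using $\tfrac{n-k}{k(n-1)}T^2_{k,n-1}\sim F_{k,n-k}$). The one point worth stating explicitly is the symmetry of the quadratic form in its two arguments, which is what converts the sampling statement about $\widehat\mu-\mu$ into a confidence region for the unknown $\mu$.
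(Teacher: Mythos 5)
Your proof is correct and is the canonical argument; the paper itself states this lemma without proof as a classical fact, and your two steps (whitening to get the $\chi^2_k$ law, then the Cochran--Fisher independence of $\bar x$ and $S$ with the Wishart law of $(n-1)S$ to get the Hotelling $T^2$ law) are exactly the standard route it implicitly relies on. Your remark reconciling the single-observation notation $X\sim\mathcal N_k(\mu,\Sigma)$ with the sample-based quantities $\bar x$ and $S$ in the second part is a sensible reading of the statement as written.
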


Then, we can construct an asymptotic confidence region for $\Ab^\star$ with level $1-\alpha$.
Remark that combining Slutsky's lemma and Lemma \ref{chi2} leads to a $\chi^2$ distribution when the covariance is estimated as done for $\Ab^\star$.
\begin{theorem}[Confidence region for $\Ab^\star$]
Suppose $((\Xb_1,\Yb_1),\ldots, (\Xb_N,\Yb_N))$ is a sequence of $iid$ random variables  satisfying the model defined in Equations \eqref{eq:model::X} and \eqref{eq:model::Y|X}.
Then, 
$$
 P\left( \Ab^\star \in \tilde{\mathcal{R}}_{\Ab^\star, \alpha} \right) \underset{n \rightarrow +\infty}{\rightarrow} 1-\alpha
 $$
where 
$$\tilde{\mathcal{R}}_{\Ab^\star, \alpha} = \left\{ \ab^\star \in \mathbb R^D \text{ s.t. } \gamma (\ab^\star-\widehat\Ab^\star)^\top ( \nabla g( \widehat\Ab)^\top  \Sigmab \nabla g(\widehat \Ab))^{-1} (\ab^\star-\widehat\Ab^\star) 
 \leq \chi^2_{D}(1-\alpha) \right\}.
 $$

\end{theorem}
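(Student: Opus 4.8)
The plan is to combine the asymptotic normality already obtained in Theorem~\ref{asympNormalityL1} with the Gaussian/$\chi^2$ correspondence of Lemma~\ref{chi2}, the bridge between the two being Slutsky's lemma applied to a consistent plug-in estimator of the asymptotic variance. Since Theorem~\ref{asympNormalityL1} has already carried out the $\Delta$-method computation, essentially all that remains is to studentise the limiting Gaussian and to rewrite a quadratic form.

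Concretely, I would argue as follows. First, recall Equation~\eqref{asympNormalityAChapStarL1}, which gives $\sqrt{N}(\widehat{\Ab}^\star-\Ab^\star)\rightarrow \mathcal N_D(\mathbf{0},\ \gamma^{-1}\nabla g(\Ab)^\top\Sigmab\,\nabla g(\Ab))$ in distribution. Next I would check that the empirical version of this covariance is consistent. The map $\Ab\mapsto\nabla g(\Ab)$ is continuous --- Proposition~\ref{propNablag} gives the closed form $\nabla g(\Ab)=s^\star\Sigmab^{-1}(\mathbb I_D-2\,\Ab\Ab^\star)$, which is well defined everywhere since $\gamma^{-1}+\Ab^\top\Sigmab^{-1}\Ab>0$ --- and $\widehat{\Ab}$ is consistent for $\Ab$, by Proposition~\ref{distribA} together with $N^{-1}\Yv^\top\Yv\rightarrow\gamma$ from the law of large numbers, so that the exact variance $\Sigmab(\Yv^\top\Yv)^{-1}$ vanishes. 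By the continuous mapping theorem, $\nabla g(\widehat{\Ab})^\top\Sigmab\,\nabla g(\widehat{\Ab})\rightarrow\nabla g(\Ab)^\top\Sigmab\,\nabla g(\Ab)$ in probability, and likewise for the inverse once the limit is known to be nonsingular. Slutsky's lemma then upgrades the first display to its studentised form, namely Equation~\eqref{asympNormalityAChapStarL1Slustky}: $\sqrt{N}\sqrt{\gamma}\,(\nabla g(\widehat{\Ab})P)^{-1}(\widehat{\Ab}^\star-\Ab^\star)\rightarrow\mathcal N_D(\mathbf{0},\mathbf{I})$, with $P$ the Cholesky factor of $\Sigmab$. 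Applying the continuous mapping theorem once more, now with the squared Euclidean norm, the squared norm of the left-hand side converges in law to $\|Z\|^2$ for $Z\sim\mathcal N_D(\mathbf{0},\mathbf{I})$, i.e. to $\chi^2_D$; this is exactly the mechanism underlying Lemma~\ref{chi2}. Expanding that squared norm and using $PP^\top=\Sigmab$ identifies it with the quadratic form appearing in the definition of $\tilde{\mathcal R}_{\Ab^\star,\alpha}$, evaluated at $\ab^\star=\Ab^\star$. To conclude, since the distribution function of $\chi^2_D$ is continuous at its $(1-\alpha)$-quantile, the probability that this quadratic form does not exceed $\chi^2_D(1-\alpha)$ converges to $1-\alpha$; by construction that event is precisely $\{\Ab^\star\in\tilde{\mathcal R}_{\Ab^\star,\alpha}\}$, which is the assertion.

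The main obstacle will be the invertibility of the asymptotic covariance, and hence of its plug-in estimator, since this is what licenses both the Slutsky step and the inversion of the quadratic form. From $\nabla g(\Ab)=s^\star\Sigmab^{-1}(\mathbb I_D-2\,\Ab\Ab^\star)$ and the matrix determinant lemma, $\det(\nabla g(\Ab))$ equals, up to a strictly positive factor, $1-2\Ab^\star\Ab=(\gamma^{-1}-\Ab^\top\Sigmab^{-1}\Ab)/(\gamma^{-1}+\Ab^\top\Sigmab^{-1}\Ab)$, so $\nabla g(\Ab)$ --- and therefore $\nabla g(\Ab)^\top\Sigmab\,\nabla g(\Ab)$ --- degenerates exactly on the lower-dimensional set $\{\Ab:\Ab^\top\Sigmab^{-1}\Ab=\gamma^{-1}\}$, which has to be excluded or handled separately. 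Everywhere else the remaining steps are routine bookkeeping.
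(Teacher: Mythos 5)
Your proposal is correct and follows essentially the same route the paper takes: the paper "proves" this theorem only via the one-line remark that Theorem~\ref{asympNormalityL1}, Slutsky's lemma and Lemma~\ref{chi2} combine to yield the $\chi^2_D$ limit, which is precisely the argument you spell out in detail. Your additional observation that $\nabla g(\Ab)^\top\Sigmab\,\nabla g(\Ab)$ degenerates exactly on the set $\{\Ab:\Ab^\top\Sigmab^{-1}\Ab=\gamma^{-1}\}$ is a valid refinement the paper does not address, and it is indeed what licenses the matrix inversion in the definition of $\tilde{\mathcal{R}}_{\Ab^\star,\alpha}$.
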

%

\subsection{Prediction region}
For a new profile $\xb_{N+1}$, the prediction is get by $\widehat\yb_{N+1} = \widehat{\Ab}^\star \xb_{N+1}$ as described in Section \ref{section::prediction}.
A prediction region is then deduced in the following theorem.

\begin{theorem}[Prediction region for $\yb$]
Suppose $((\Xb_1,\Yb_1),\ldots, (\Xb_N,\Yb_N))$ is a sequence of $iid$ random variables  satisfying the model defined in Equations \eqref{eq:model::X} and \eqref{eq:model::Y|X}.
Then, 
$$
 P\left( \yb_{n+1} \in \widetilde{\mathcal{PR}}_{\yb, \alpha} \right) \underset{n \rightarrow +\infty}{\rightarrow} 1-\alpha$$
where 
$$\widetilde{\mathcal{PR}}_{\yb, \alpha} = \left\{ y\in \mathbb R \text{ s.t. }   (y-\widehat\Ab^\star \xb_{N+1})^\top v^{-1} (y-\widehat\Ab^\star\xb_{N+1}) 
 \leq \chi^2_{D}(1-\alpha)
 \right\}
$$
with $v =  \gamma \xb_{N+1}^\top\nabla g( \widehat\Ab)^\top  \Sigmab \nabla g(\widehat \Ab) \xb_{N+1} + s^\star$.
\end{theorem}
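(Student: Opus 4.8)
The plan is to split the prediction error $\yb_{N+1}-\widehat{\Ab}^\star\xb_{N+1}$ into the irreducible noise of the forward regression and the estimation error of $\widehat{\Ab}^\star$, which are independent, and to treat them separately. Take $(\Xb_{N+1},\Yb_{N+1})$ from the same model as the training sample and independent of it, so that $\yb_{N+1}=\Ab^\star\xb_{N+1}+\varepsilon_{N+1}$ with $\varepsilon_{N+1}\sim\mathcal N(0,s^\star)$ independent of $\widehat{\Ab}$, hence of $\widehat{\Ab}^\star$. Then
$$
\yb_{N+1}-\widehat{\Ab}^\star\xb_{N+1}=\varepsilon_{N+1}+(\Ab^\star-\widehat{\Ab}^\star)\xb_{N+1},
$$
and, conditionally on the observed $\xb_{N+1}$, the right-hand side is the sum of a centered Gaussian of variance $s^\star$ and the linear functional $(\Ab^\star-\widehat{\Ab}^\star)\xb_{N+1}$ of the estimation error.

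For the estimation term I would invoke Theorem~\ref{asympNormalityL1}, which gives the asymptotic normality of $\widehat{\Ab}^\star$ with limiting covariance expressed through $\nabla g$; composing with the continuous linear map $u\mapsto u^\top\xb_{N+1}$ and the continuous mapping theorem shows that $(\Ab^\star-\widehat{\Ab}^\star)\xb_{N+1}$ is asymptotically a centered scalar Gaussian. Adding the independent, exactly Gaussian $\varepsilon_{N+1}$, the prediction error is asymptotically centered Gaussian with variance $v$ of the statement, whose first summand is $\xb_{N+1}^\top$ applied to the limiting covariance of $\widehat{\Ab}^\star$ and whose second summand $s^\star$ is the residual variance. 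Substituting $\widehat{\Ab}$ for $\Ab$ inside $\nabla g$ in $v$ is legitimate because $\widehat{\Ab}\to\Ab$ in probability (its variance $\Sigmab(\Yv^\top\Yv)^{-1}$ in Proposition~\ref{distribA} tends to $0$) and $\Ab\mapsto\nabla g(\Ab)$ is continuous wherever $\Gammab^{-1}+\Ab^\top\Sigmab^{-1}\Ab$ is invertible (Proposition~\ref{propNablag}), so the plug-in $v$ is a consistent estimator of the limiting variance.

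To conclude, I would standardise: a centered Gaussian divided by a consistent estimator of its standard deviation converges to $\mathcal N(0,1)$ by Slutsky's lemma, so the scalar quadratic form $(\yb_{N+1}-\widehat{\Ab}^\star\xb_{N+1})^\top v^{-1}(\yb_{N+1}-\widehat{\Ab}^\star\xb_{N+1})$ converges in distribution to a $\chi^2$ law, exactly as in Lemma~\ref{chi2}. Rewriting the event that this quadratic form is at most the corresponding quantile as $\{\yb_{N+1}\in\widetilde{\mathcal{PR}}_{\yb,\alpha}\}$ yields $P(\yb_{N+1}\in\widetilde{\mathcal{PR}}_{\yb,\alpha})\to 1-\alpha$.

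The point requiring care is the mismatch of scales between the two error sources: $\varepsilon_{N+1}$ is $O(1)$ while $(\Ab^\star-\widehat{\Ab}^\star)\xb_{N+1}$ is of order $N^{-1/2}$, so in the limit only $s^\star$ survives and the $\nabla g$ term in $v$ acts as a finite-sample correction that sharpens coverage without changing the asymptotic statement; one must verify that retaining it is compatible with the stated limit (equivalently, that $v$ has a well-defined limit under which the standardisation still applies). The remaining ingredients — consistency of the inverse-regression estimator $\widehat{\Ab}^\star$, almost-sure well-definedness of $\nabla g(\widehat{\Ab})$, and the harmlessness of conditioning on the fresh covariate $\xb_{N+1}$ by its independence from the training data — are routine.
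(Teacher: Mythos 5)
Your decomposition of the prediction error into the independent residual $\varepsilon_{N+1}$ and the $O_P(N^{-1/2})$ estimation error $(\Ab^\star-\widehat{\Ab}^\star)\xb_{N+1}$, followed by the delta-method normality of $\widehat{\Ab}^\star$, the plug-in consistency of $\nabla g(\widehat{\Ab})$, Slutsky, and the Gaussian-to-$\chi^2$ reduction of Lemma~\ref{chi2}, is exactly the argument the paper intends (the theorem is stated without a written proof, but this is the route implied by Theorem~\ref{asympNormalityL1} and by the paper's own remark that the prediction variance splits into an estimation part and a residual part), and your observation that only $s^\star$ survives in the limit so the $\nabla g$ term is a harmless finite-sample correction is the right way to reconcile the two scales. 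One caveat: your argument correctly delivers a $\chi^2_1$ limit for the scalar quadratic form, so the quantile in the stated region should read $\chi^2_1(1-\alpha)$ rather than $\chi^2_D(1-\alpha)$ --- with $D$ degrees of freedom the coverage would not converge to $1-\alpha$; this is a defect of the statement as printed, not of your proof.
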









 \end{document}